\documentclass[11pt]{amsart}
\usepackage{amsmath}
\usepackage{amssymb}
\setcounter{MaxMatrixCols}{10}

\newtheorem{theorem}{Theorem}[section]
\theoremstyle{plain}

\newtheorem{definition}{Definition}[section]
\newtheorem{example}{Example}[section]

\newtheorem{lemma}{Lemma}[section]

\newtheorem{proposition}{Proposition}[section]
\newtheorem{remark}{Remark}[section]

\numberwithin{equation}{section}

\begin{document}
\title[Closedness and D-connectedness in QV-Approach Spaces]{The Notion of Closedness and D-connectedness in Quantale-valued Approach Spaces}

\author{Muhammad Qasim}
\curraddr{Department of Mathematics\\ National University of Sciences \& Technology (NUST)\\ H-12 Islamabad, 44000 \\ Pakistan}
\email{muhammad.qasim@sns.nust.edu.pk \\ qasim99956@gmail.com}

\author{Samed \"{O}zkan}
\curraddr{Department of Mathematics\\ Nev\c{s}ehir Hac{\i} Bekta\c{s} Veli University \\
	50300 Nev\c{s}ehir \\ Turkey }
\email{ozkans@nevsehir.edu.tr}

\date{}

\subjclass[2010]{54A20, 54E70, 54E90, 18B35}

\keywords{L-approach space, L-gauge space, Topological category, Separation, Closed point, D-connectedness}

\begin{abstract}
In this paper, we characterize the local $T_{0}$ and $T_{1}$ separation axioms for quantale-valued gauge space, show how these concepts are related to each other and apply them to $L$-approach space and $L$-approach system.  Furthermore, we give the characterization of a closed point and $D$-connectedness in quantale-valued gauge space. Finally, we compare all these concepts with other.
\end{abstract}

\maketitle

\section{Introduction}
Approach spaces have been introduced by Lowen \cite{Lowen1989, Lowen1997} to generalize metric and topological concepts, have many applications in almost all areas of mathematics including probability theory \cite{approachProbability}, convergence theory \cite{ConvApp}, domain theory \cite{AppDomain} and fixed point theory \cite{fixedpointApp}. Due to its huge importance, several generalization of approach spaces appeared recently such as probabilistic approach spaces \cite{probApp}, quantale-valued gauge spaces \cite{QuantaleGauge}, quantale-valued approach system \cite{QuantaleSystem} and quantale-valued approach w.r.t closure operators \cite{TholenApp}. Recently, some quantale-valued approach spaces \cite{L-AppJager} are also characterized by using quantale-valued bounded interior spaces and bounded strong topological spaces which are commonly used by fuzzy mathematicians.

In 1991, Baran \cite{BaranSep} introduced local separation axioms and notion of closedness in set-based topological categories which are used to define several distinct Hausdorff objects \cite{BaranT2}, $T_{3}$  and $T_{4}$ objects \cite{BaranT3}, regular, completely regular, normed objects \cite{BaranNormal}, notion of compactness and minimality, perfectness \cite{BaranPerfect} and closure operators in the sense of Guili and Dikranjan \cite{Closure} in some well-known topological categories \cite{BaranPreordClos, BaranSemiunifClos}.  

The main objective of this paper is:
\begin{enumerate}
	\item[(i)] to characterize local $T_{0}$ and local $T_{1}$ quantale-valued gauge spaces, quantale-valued distance-approach and quantale-valued approach systems, show their relationship to each other,
	\item[(ii)]  to provide the characterization of the notion of closedness and $D$-connectedness in quantale-valued approach space and how are linked to each other,
	\item[(iii)] to give a comparison between local $T_{0}$ and $T_{1}$ quantale-valued approach spaces, and between notion of closedness and $D$-connectedness, and to examine their relationships.
	\end{enumerate}

\section{Preliminaries}
Recall that for any non-empty set $L$ with $\leq$ order is partial order set (poset) if it satisfies for all $a, b, c \in L$, $a \leq a$ (reflexivity), $a \leq b \wedge b \leq a \Rightarrow a =b$ (anti-symmetry) and $a \leq b \wedge b \leq c \Rightarrow a \leq c$ (transitivity). For any poset, the top or (maximum) element and bottom or (minimum) element is denoted by $\top$ and $\bot$ respectively. $(L, \leq)$ is complete lattice if all subsets of $L$ have both supremum ($\bigvee$) and infimum ($\bigwedge$), and \textit{well-below relation} $\alpha \lhd \beta$ if all subsets $A \subseteq L$ such that $\beta \leq \bigvee A$ there is $\delta \in A$ such that $\alpha \leq \delta$. Similarly, \textit{well-above relation} $\alpha \prec \beta $ if all subsets $A \subseteq L$ such that $\bigwedge A \leq \alpha$ there exists $\delta \in A$ such that $\delta \leq \beta$.  Furthermore, $L$ is completely distributive lattice iff for any $\alpha \in L$, $\alpha = \bigvee\{\beta : \beta \lhd \alpha\}$.

The triple $(L, \leq, *)$ is called \textit{quantale} if $(L, *)$ is a semi group, and operation $*$ satisfies the following properties: for all $\alpha_{i}, \beta \in L$: $(\bigvee_{i \in I} \alpha_{i})* \beta = \bigvee_{i \in I} (\alpha_{i} * \beta)$ and $\beta * (\bigvee_{i \in I} \alpha_{i}) = \bigvee_{i \in I} (\beta * \alpha_{i})$ and $(L, \leq)$ is a complete lattice.

A quantale $(L, \leq, *)$ is \textit{commutative} if $(L, *)$ is a commutative semi group and \textit{integral} if for all $\alpha \in L$ : $\alpha * \top = \top * \alpha = \alpha$.

\begin{definition} (cf. \cite{QuantaleGauge})
An $\textit{L-metric}$ on a set $X$ is a map $d: X \times X \rightarrow (L, \leq, *)$ satisfies for all $x \in X$, $d(x,x)= \top$, and for all $x,y,z \in X$, $d(x,y) * d(y,z) \leq d(x,z)$.	
\end{definition}
A map $f:(X, d_{X}) \rightarrow (Y, d_{Y})$ is a $L-$metric morphism if for all $x_{1}, x_{2} \in X$, $d_{X}(x_{1}, x_{2}) \leq d_{Y}(f(x_{1}), f(x_{2}))$.

$\textbf{L-MET}$ is category whose objects are $L-$metric spaces and morphisms are $L-$metric morphism.
\begin{example}
	\begin{enumerate}
		\item[(i)] If $L =\{0,1\}$, then $L-$metric space is a preordered set.
		\item[(ii)] If $(L= [0, \infty], \geq, +)$, then $L-$metric space is extended pseudo-quasi metric space. 
	\end{enumerate}
\end{example}

\begin{definition} (cf. \cite{QuantaleGauge})
	Let $\mathcal{H} \subseteq \textbf{L-MET(X)}$ and $d \in \textbf{L-MET(X)}$.
	\begin{enumerate}
		\item[(i)] $d$ is called \textit{locally supported} by $\mathcal{H}$ if for all $x \in X$, $\alpha \lhd \top$, $\bot \prec \omega$, there is $e \in \mathcal{H}$ such that $e(x,.)* \alpha \leq d(x, .) \vee \omega$.
		\item[(ii)] $\mathcal{H}$ is called \textit{locally directed} if for all subsets $\mathcal{H}_{0} \subseteq \mathcal{H}$, $\bigwedge_{d \in \mathcal{H}_{0}} d$ is locally supported by $\mathcal{H}$.
		\item[(iii)] $\mathcal{H}$ is called \textit{locally saturated} if for all $ d \in \textbf{L-MET(X)}$, we have $d \in \mathcal{H}$ whenever $d$ is locally supported by $\mathcal{H}$.
		\item[(iv)] The set $\tilde{\mathcal{H}} = \{d \in \textbf{L-MET(X)}: d$ is locally supported by $ \mathcal{H} \}$ is called \textit{local saturation} of $\mathcal{H}$.
	\end{enumerate}
\end{definition}

\begin{definition} (cf. \cite{QuantaleGauge})
	Let $X$ be a set. $\mathcal{G} \subseteq \textbf{L-MET(X)}$ is called an $L$-gauge if $\mathcal{G}$ satisfies the followings:
	\begin{enumerate}
		\item[(i)] $\mathcal{G} \not = \emptyset$.
		\item[(ii)] $d \in \mathcal{G}$ and $d \leq e$ implies $e \in \mathcal{G}$.
		\item[(iii)] $d, e \in \mathcal{G}$ implies $d \wedge e \in \mathcal{G}$.
		\item[(iv)] $\mathcal{G}$ is locally saturated.
	\end{enumerate}
The pair $(X, \mathcal{G})$ is called an $L$-gauge space.
\end{definition}
A map $f:(X, \mathcal{G}) \rightarrow (X', \mathcal{G}')$ is a $L$-gauge morphism if $d' \circ (f \times f) \in \mathcal{G}$ when $d' \in \mathcal{G}'$. 

The category whose objects are $L$-gauge spaces and morphisms are $L$-gauge morphisms is denoted by $\textbf{L-GS}$ (cf. \cite{QuantaleGauge}).  

\begin{proposition} \label{locally directed L-gauge} (cf. \cite{QuantaleGauge})
	Let $(L, \leq, *)$ be a quantale. If $\emptyset \not = \mathcal{H} \subseteq \textbf{L-MET(X)}$ is locally directed, then $\mathcal{G} = \tilde{\mathcal{H}}$ is a gauge with $\mathcal{H}$ as a basis.
\end{proposition}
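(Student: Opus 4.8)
The plan is to verify the four gauge axioms for $\mathcal{G}=\tilde{\mathcal{H}}$ directly from the definition of local support, and along the way to record the inclusion $\mathcal{H}\subseteq\tilde{\mathcal{H}}$, which is exactly what makes $\mathcal{H}$ a basis. For the reflexivity $\mathcal{H}\subseteq\tilde{\mathcal{H}}$, and hence for the nonemptiness in axiom (i), I would take any $d\in\mathcal{H}$ (possible since $\mathcal{H}\neq\emptyset$) and use $e=d$ itself as a support: since $\alpha\lhd\top$ gives $\alpha\leq\top$ and $*$ is monotone and integral, $d(x,\cdot)*\alpha\leq d(x,\cdot)*\top=d(x,\cdot)\leq d(x,\cdot)\vee\omega$ for every $x$, $\alpha\lhd\top$, $\bot\prec\omega$. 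Axiom (ii) is equally direct: if $d\in\tilde{\mathcal{H}}$ and $d\leq e$, then the support $e'(x,\cdot)*\alpha\leq d(x,\cdot)\vee\omega$ furnished for $d$ also works for $e$, because $d(x,\cdot)\vee\omega\leq e(x,\cdot)\vee\omega$.

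The technical heart of the argument is a composition (or chaining) principle for local-support estimates, and I expect the bookkeeping of the scaling parameters to be the main obstacle. The two structural facts I would isolate first are: (a) integrality, giving $\omega*\alpha\leq\omega$ whenever $\alpha\leq\top$, so that error terms of the form $\omega*\alpha$ can be absorbed into $\omega$; and (b) a splitting property of the well-below relation relative to $*$, namely that for each $\alpha\lhd\top$ there is $\beta\lhd\top$ with $\alpha\leq\beta*\beta$ (the quantale-valued analogue of $\varepsilon=\tfrac{\varepsilon}{2}+\tfrac{\varepsilon}{2}$, which holds under the standing hypotheses on $L$). Given two estimates $g(x,\cdot)*\beta\leq h(x,\cdot)\vee\omega$ and $e(x,\cdot)*\beta\leq g(x,\cdot)\vee\omega$, I would multiply the second on the right by $\beta$, distribute $*$ over $\vee$, and use (a) to get $e(x,\cdot)*(\beta*\beta)\leq h(x,\cdot)\vee\omega$; then (b) together with monotonicity of $*$ yields $e(x,\cdot)*\alpha\leq h(x,\cdot)\vee\omega$ for the prescribed $\alpha$. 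This is the mechanism I will reuse for both remaining axioms.

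For axiom (iii), I first note $d_{1}\wedge d_{2}\in\textbf{L-MET(X)}$: reflexivity is clear, and the triangle inequality follows since $(d_{1}\wedge d_{2})(x,y)*(d_{1}\wedge d_{2})(y,z)\leq d_{i}(x,y)*d_{i}(y,z)\leq d_{i}(x,z)$ for each $i$, hence is $\leq(d_{1}\wedge d_{2})(x,z)$. Now fix $x$, $\alpha\lhd\top$, $\bot\prec\omega$ and split $\alpha\leq\beta*\beta$ with $\beta\lhd\top$. Local support of $d_{1}$ and $d_{2}$ gives $e_{1},e_{2}\in\mathcal{H}$ with $e_{i}(x,\cdot)*\beta\leq d_{i}(x,\cdot)\vee\omega$; setting $f=e_{1}\wedge e_{2}\in\textbf{L-MET(X)}$ and using $f\leq e_{i}$ together with the distributivity of $\vee$ over $\wedge$ gives $f(x,\cdot)*\beta\leq(d_{1}\wedge d_{2})(x,\cdot)\vee\omega$. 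Since $\{e_{1},e_{2}\}\subseteq\mathcal{H}$ and $\mathcal{H}$ is locally directed, $f$ is itself locally supported by $\mathcal{H}$, so there is $e\in\mathcal{H}$ with $e(x,\cdot)*\beta\leq f(x,\cdot)\vee\omega$. Feeding these two estimates into the chaining principle produces $e\in\mathcal{H}$ with $e(x,\cdot)*\alpha\leq(d_{1}\wedge d_{2})(x,\cdot)\vee\omega$, so $d_{1}\wedge d_{2}\in\tilde{\mathcal{H}}$.

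Finally, axiom (iv) asks for $\tilde{\mathcal{G}}=\mathcal{G}$, i.e. the idempotency $\tilde{\tilde{\mathcal{H}}}=\tilde{\mathcal{H}}$. The inclusion $\mathcal{G}\subseteq\tilde{\mathcal{G}}$ is the reflexivity already used. For the reverse, I would take $d$ locally supported by $\mathcal{G}=\tilde{\mathcal{H}}$; fixing $x$, $\alpha\lhd\top$, $\bot\prec\omega$ and splitting $\alpha\leq\beta*\beta$, there is $g\in\tilde{\mathcal{H}}$ with $g(x,\cdot)*\beta\leq d(x,\cdot)\vee\omega$, and because $g$ is locally supported by $\mathcal{H}$ there is $e\in\mathcal{H}$ with $e(x,\cdot)*\beta\leq g(x,\cdot)\vee\omega$. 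The same chaining principle yields $e(x,\cdot)*\alpha\leq d(x,\cdot)\vee\omega$, so $d\in\tilde{\mathcal{H}}=\mathcal{G}$. Together with $\mathcal{H}\subseteq\mathcal{G}$ this shows $\mathcal{G}$ is a locally saturated gauge having $\mathcal{H}$ as a basis. The one point to keep honest throughout is the parameter management, that is, choosing a single split factor $\beta$ so that one application of (b) and integrality closes each chain, which is where I expect essentially all of the genuine work to sit.
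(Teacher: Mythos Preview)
The paper does not actually prove this proposition: it is stated with the tag ``(cf.\ \cite{QuantaleGauge})'' and no proof environment follows, so there is nothing to compare your argument against within this paper itself. The result is imported wholesale from J\"ager--Yao, and the present authors only invoke it (in the proof of Proposition~\ref{gdis}) to conclude that a singleton $\{d_{dis}\}$ is an $L$-gauge base.

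Your outline is the expected direct verification of the four gauge axioms and is essentially correct. One small point worth tightening: the ``splitting property'' you invoke, that for every $\alpha\lhd\top$ there is $\beta\lhd\top$ with $\alpha\leq\beta*\beta$, is not quite how it comes out of complete distributivity. What you actually get is that $\top=\top*\top=\bigl(\bigvee_{\beta\lhd\top}\beta\bigr)*\bigl(\bigvee_{\gamma\lhd\top}\gamma\bigr)=\bigvee_{\beta,\gamma\lhd\top}\beta*\gamma$, so $\alpha\lhd\top$ yields $\beta,\gamma\lhd\top$ with $\alpha\leq\beta*\gamma$; the two factors need not be equal. This does not affect your chaining argument at all---you simply request one support at level $\beta$ and the other at level $\gamma$---but as written your sentence asserts a single $\beta$, which is a slightly stronger claim than what the hypotheses immediately give you. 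With that cosmetic adjustment the proof goes through exactly as you describe.
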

\begin{definition} (cf. \cite{QuantaleGauge})
	A map $\delta: X \times P(X) \rightarrow (L, \leq, *)$ is called $L$-approach if $\delta$ satisfies the followings:
	\begin{enumerate}
		\item[(i)] $\forall x \in X$: $\delta(x, \{x\}) = \top$.
		\item[(ii)] $\forall x \in X$ and $\emptyset$, the empty set: $\delta(x, \emptyset) = \bot$.
		\item[(iii)] $\forall x \in X$ and $\forall A, B \subseteq X$: $\delta(x, A \cup B) = \delta(x, A) \vee \delta(x, B)$.
		\item[(iv)]  $\forall x \in X$, $\forall A\subseteq X$ and $\forall \alpha \in L$: $\delta(x, A) \geq \delta(x, \overline{A}^{\alpha}) * \alpha$, where $\overline{A}^{\alpha} = \{x \in X : \delta(x, A) \geq \alpha \}$.
	\end{enumerate}
The pair $(X, \delta)$ is called $L$-Approach distance space. 
\end{definition}
A map $f:(X, \delta) \rightarrow (X', \delta')$ is a $L$-approach morphism if for all $x \in X$ and $A \subseteq X$: $\delta(x, A) \leq \delta'(f(x), f(A))$. 

The category whose objects are $L$-approach distance space and morphisms are $L$-approach morphisms is denoted by $\textbf{L-AP}$.

\begin{definition} (cf. \cite{QuantaleSystem})
	Let $\mathcal{A} \subseteq L^{X}$ and $\varphi \in L^{X}$.
	\begin{enumerate}
		\item[(i)] $\varphi$ is \textit{supported} by $\mathcal{A}$ if for all $\alpha \lhd \top$, $\bot \prec \omega$ there exists $\varphi_{\alpha}^{\omega} \in \mathcal{A}$ such that $\varphi_{\alpha}^{\omega}*\alpha \leq \varphi \vee \omega$.
		\item[(ii)] $\mathcal{A}$ is \textit{saturated} if $\varphi \in \mathcal{A}$ whenever $\varphi$ is supported by $\mathcal{A}$.
		\item[(iii)] For $\mathcal{B} \subseteq L^{X}$, $\tilde{\mathcal{B}} = \{\varphi \in L^{X}: \varphi$ is supported by $\mathcal{B}\}$ is called saturation of $\mathcal{B}$.
	\end{enumerate}
\end{definition}

\begin{definition} (cf. \cite{QuantaleSystem})
	Let $\mathcal{A}(x) \subseteq L^{X}$, for all $x \in X$. Then, $\mathcal{A} = (\mathcal{A}(x))_{x \in X}$ is called an $L$-\textit{approach system} if for all $x \in X$,
	\begin{enumerate}
		\item[(i)] $\mathcal{A}(x)$ is a filter in $L^{X}$, i.e., $\varphi \in \mathcal{A}(x)$ and $\varphi \leq \varphi'$ implies $\varphi' \in \mathcal{A}(x)$, and $\varphi, \varphi' \in \mathcal{A}(x)$ implies $\varphi \wedge \varphi' \in \mathcal{A}(x)$.
		\item[(ii)] $\varphi (x) = \top$ whenever $\varphi \in \mathcal{A}(x)$.
		\item[(iii)] $\mathcal{A}(x)$ is saturated.
		\item[(iv)] For all $\varphi \in \mathcal{A}(x)$, $\alpha \lhd \top$, $\bot \prec \omega$ there exists a family $(\varphi_{z})_{z \in X} \in \prod_{z \in X} \mathcal{A}(z)$ such that $\varphi_{x}(z) * \varphi_{z}(y) * \alpha \leq \varphi(y) \vee \omega$, $\forall y,z \in X$. 
	\end{enumerate}
The pair $(X, \mathcal{A})$ is called an $L$-\textit{approach system space}.
\end{definition} (cf. \cite{QuantaleSystem})
The map $f : (X, \mathcal{A}) \rightarrow (X', \mathcal{A}')$ is a $L$-\textit{approach system morphism} if for all $x \in X$, $\varphi' \circ f \in \mathcal{A}(x)$ when $\varphi' \in \mathcal{A}'(f(x))$.

The category whose objects are $L$-approach system spaces and morphisms are $L$-approach system morphisms is denoted by $\textbf{L-AS}$.

\begin{definition} (cf. \cite{QuantaleSystem})
	Let $\mathcal{B}(x) \subset L^{X}$, for all $x \in X$. Then,  $(\mathcal{B}(x))_{x \in X}$ is called $L$-\textit{approach system basis} if for all $x \in X$,
	\begin{enumerate}
		\item[(i)] $\mathcal{B}(x)$ is a filter basis in $L^{X}$.
		\item[(ii)] $\varphi (x) = \top$ whenever $\varphi \in \mathcal{B}(x)$.
		\item[(iii)] For all $\varphi \in \mathcal{B}(x)$, $\alpha \lhd \top$, $\bot \prec \omega$ there exists a family $(\varphi_{z})_{z \in X} \in \prod_{z \in X} \mathcal{B}(z)$ such that $\varphi_{x}(z) * \varphi_{z}(y) * \alpha \leq \varphi(y) \vee \omega$, $\forall y,z \in X$.
	\end{enumerate}
\end{definition}

\begin{definition} (cf. \cite{QuantaleSystem}) \label{L-App Base}
	Let  $(\mathcal{A}(x))_{x \in X}$ is called an $L$-approach system and $(\mathcal{B}(x))_{x \in X}$ be the collection of filter bases on $L^{X}$. Then, $(\mathcal{B}(x))_{x \in X}$ is called \textit{base} for $L$-approach system if $\tilde{\mathcal{B}}(x) = \{\varphi \in L^{X}: \varphi$ is supported by $\mathcal{B}(x)\}= \mathcal{A}(x)$.
\end{definition}

Note that the categories $\textbf{L-GS}$, $\textbf{L-AP}$ and $\textbf{L-AS}$ are equivalent categories \cite{QuantaleGauge, QuantaleSystem}, and they are all topological categories on $\textbf{Set}$ and we will denote any $L$-approach space by $(X, \mathfrak{B})$.

\begin{remark} (cf. \cite{QuantaleGauge, QuantaleSystem}) \label{Transition}
	\begin{enumerate}
		\item[(i)] The transition from $L$-gauge to $L$-approach is determined by
		\begin{equation*}
		\delta(x,A)= \bigwedge_{d \in \mathcal{G}} \bigvee_{a \in A} d(x,a)
		\end{equation*}
		\item[(ii)] The transition from $L$-approach distance to $L$-approach system is defined by
		\begin{equation*}
	\mathcal{A}(x)= \{\varphi \in L^{X}: \delta(x,A) \leq \bigvee_{a \in A} \varphi(a),\forall A \subseteq X, x \in X\}
		\end{equation*}
		\item[(iii)] The Transition from $L$-approach system to $L$-gauge is given by
		\begin{equation*}
		\mathcal{G}^{\mathcal{A}} = \{d \in\textbf{L-MET(X)}: \forall x \in X, d(x,.) \in \mathcal{A}(x) \}
		\end{equation*}
		\item [(iv)] The transition from $L$-approach distance to $L$-gauge is given by 
		\begin{equation*}
		\mathcal{G} = \{d \in \textbf{L-MET(X)}: \forall A \subseteq X, x \in X : \delta(x,A) \leq \bigvee_{a \in A} d(x,a) \}
		\end{equation*}
		\item[(v)] The transition from $L$-approach system to $L$-approach distance is determined by 
		\begin{equation*}
		\delta(x,A)= \bigwedge_{\varphi \in \mathcal{A}(x)} \bigvee_{a \in A} \varphi(a)
		\end{equation*}
	\end{enumerate}
\end{remark}

\begin{lemma} (cf. \cite{QuantaleGauge, QuantaleSystem}) \label{initialStr}
	Let $(X_{i}, \mathfrak{B}_{i})$ be the collection of $L$-approach spaces and $f_{i}: X \rightarrow (X_{i}, \mathfrak{B}_{i})$ be a source and $x \in X$.
	\begin{enumerate}
		\item[(i)] The initial $L$-gauge base on $X$ is given by 
		\begin{equation*}
		\mathcal{H} = \{\bigwedge_{i \in K} d_{i} \circ (f_{i} \times f_{i}) : K \subseteq I finite, d_{i} \in \mathcal{G}_{i}, \forall i \in I \}
		\end{equation*}
		\item[(ii)] The initial $L$-approach system base is provided by
		\begin{equation*}
		\mathcal{B}(x) = \{\bigwedge_{i \in K} \varphi_{i} \circ f_{i} : K \subseteq I finite, \varphi_{i} \in \mathcal{A}_{i}(f_{i}(x)), \forall i \in I \}
		\end{equation*}
		
		\end{enumerate}
\end{lemma}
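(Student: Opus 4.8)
The overall plan is to read both assertions as explicit descriptions of the \emph{initial lift} of the source $(f_i : X \to (X_i,\mathfrak{B}_i))_{i\in I}$ in the topological categories $\textbf{L-GS}$ and $\textbf{L-AS}$ over $\textbf{Set}$, and in each case to verify the three defining requirements of an initial source: that the candidate data defines a genuine object, that every $f_i$ becomes a morphism for it, and that it enjoys the universal property of initiality. I would settle part (i) first and then either transport or mirror the argument for part (ii). For part (i) the first step is to confirm that $\mathcal{H}$ is a legitimate $L$-gauge basis. Each candidate element $e = \bigwedge_{i\in K} d_i\circ(f_i\times f_i)$ (with $K$ finite) lies in $\textbf{L-MET(X)}$: reflexivity is immediate, since $d_i(f_i(x),f_i(x))=\top$ forces $e(x,x)=\bigwedge_{i\in K}\top=\top$, while the triangle inequality follows from monotonicity of $*$ (a consequence of its distributivity over $\bigvee$) together with $e\le d_i\circ(f_i\times f_i)$, giving
\[
e(x,y)*e(y,z)\le d_i(f_i(x),f_i(y))*d_i(f_i(y),f_i(z))\le d_i(f_i(x),f_i(z))
\]
for every $i\in K$, whence $e(x,y)*e(y,z)\le\bigwedge_{i\in K}d_i(f_i(x),f_i(z))=e(x,z)$. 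Moreover $\mathcal{H}$ is closed under finite meets, since for an index common to two finite families one replaces $d_i,d_i'$ by $d_i\wedge d_i'\in\mathcal{G}_i$, invoking gauge axiom (iii) in each $\mathcal{G}_i$.

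The technical heart of part (i) is showing $\mathcal{H}$ is \emph{locally directed}, so that Proposition \ref{locally directed L-gauge} applies and $\mathcal{G}:=\tilde{\mathcal{H}}$ is an $L$-gauge with basis $\mathcal{H}$. For a finite subfamily $\mathcal{H}_0$ the meet $\bigwedge_{d\in\mathcal{H}_0}d$ already lies in $\mathcal{H}$ and supports itself; the obstacle is an \emph{infinite} $\mathcal{H}_0$, whose meet ranges over an infinite index set yet must be locally supported at each point by a single finite-index member of $\mathcal{H}$. This is exactly where the well-below relation $\alpha\lhd\top$ and well-above relation $\bot\prec\omega$ intervene: the slack they permit lets a finite truncation dominate the infinite meet pointwise in the second variable, as in \cite{QuantaleGauge}. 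Granting this, each $f_i$ is an $L$-gauge morphism trivially, since for $d_i\in\mathcal{G}_i$ we have $d_i\circ(f_i\times f_i)\in\mathcal{H}\subseteq\tilde{\mathcal{H}}=\mathcal{G}$ (take $K=\{i\}$), using that every member of $\mathcal{H}$ supports itself and so lies in its local saturation.

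The universal property is the step I expect to be the main obstacle. Given any $L$-gauge space $(Y,\mathcal{G}_Y)$ and a map $g:Y\to X$ with each $f_i\circ g$ a morphism, I must show $g$ is a morphism, i.e. $d\circ(g\times g)\in\mathcal{G}_Y$ for every $d\in\mathcal{G}=\tilde{\mathcal{H}}$. As $d\circ(g\times g)$ is a pullback of an $L$-metric it lies in $\textbf{L-MET(Y)}$, and since $\mathcal{G}_Y$ is locally saturated it suffices to show it is locally supported by $\mathcal{G}_Y$. Fixing $y\in Y$, $\alpha\lhd\top$, $\bot\prec\omega$ and putting $x=g(y)$, local support of $d$ by $\mathcal{H}$ furnishes $e=\bigwedge_{i\in K}d_i\circ(f_i\times f_i)\in\mathcal{H}$ with $e(x,\cdot)*\alpha\le d(x,\cdot)\vee\omega$; evaluating this bound at second arguments of the form $g(y')$ yields
\[
(e\circ(g\times g))(y,\cdot)*\alpha\le(d\circ(g\times g))(y,\cdot)\vee\omega .
\]
Now $e\circ(g\times g)=\bigwedge_{i\in K}d_i\circ((f_i\circ g)\times(f_i\circ g))$ is a finite meet of members of $\mathcal{G}_Y$ (each $f_i\circ g$ being a morphism), hence lies in $\mathcal{G}_Y$; this exhibits the required local support, and saturation places $d\circ(g\times g)$ in $\mathcal{G}_Y$. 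The forward implication of the universal property is merely closure of morphisms under composition, so $\mathcal{G}$ is indeed the initial structure.

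For part (ii) I would proceed in exact parallel. First one checks that $(\mathcal{B}(x))_{x\in X}$ is an $L$-approach system basis: each $\varphi=\bigwedge_{i\in K}\varphi_i\circ f_i$ satisfies $\varphi(x)=\bigwedge_{i\in K}\varphi_i(f_i(x))=\top$, the family is a filter basis by the same finite-meet argument, and the compatibility condition (iii) of the basis definition is produced by applying the corresponding condition in each $\mathcal{A}_i(f_i(x))$ to obtain families $(\varphi_{i,w})_{w}$ and then pulling these back and meeting over $i\in K$; this condition is the approach-system analogue of local directedness and is the main obstacle here. The source maps are morphisms because $\varphi_i\circ f_i\in\mathcal{B}(x)$ for $\varphi_i\in\mathcal{A}_i(f_i(x))$, and the universal property is verified by the same support-pullback-then-saturate argument as above, now carried out in $L^{X}$ rather than $\textbf{L-MET(X)}$. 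Alternatively, since the excerpt records that $\textbf{L-GS}$, $\textbf{L-AP}$ and $\textbf{L-AS}$ are equivalent over $\textbf{Set}$, such a concrete equivalence preserves initial lifts, so (ii) can instead be deduced from (i) by rewriting the initial gauge basis through the transition formulas of Remark \ref{Transition}.
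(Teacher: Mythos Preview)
The paper does not prove this lemma at all: it is stated with the attribution ``(cf.\ \cite{QuantaleGauge, QuantaleSystem})'' and then invoked without argument in the proofs of Theorems~\ref{local T0 L-App}, \ref{p is closed}, and elsewhere. There is therefore nothing in the paper to compare your attempt against.

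Your sketch is a reasonable reconstruction of how the result is established in the cited sources. The organisation into (a) showing $\mathcal{H}$ consists of $L$-metrics and is closed under finite meets, (b) invoking Proposition~\ref{locally directed L-gauge} once local directedness is secured, (c) checking that each $f_i$ is a morphism, and (d) verifying the universal property via local support and saturation, is exactly the shape of the argument in J\"ager--Yao. You correctly flag local directedness for infinite $\mathcal{H}_0\subseteq\mathcal{H}$ as the only non-routine step and defer its resolution to the reference; that is honest, though it means your sketch is not self-contained at precisely the point where the work lies. Your universal-property argument is clean and correct. The alternative route you mention for part~(ii)---transporting~(i) along the concrete equivalence recorded in Remark~\ref{Transition}---is also legitimate, since concrete isomorphisms over $\mathbf{Set}$ preserve initial lifts.
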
  

\begin{proposition} \label{gdis}
Let $(X, \mathcal{G})$ be an $L$-gauge space. The discrete $L$-gauge structure on $X$ is $\mathcal{G}_{dis} = \textbf{L-MET(X)}$ (all $L$-metric spaces on $X$) and the base that induces $\textbf{L-MET(X)}$ consists of following discrete $L$-metric space:
\begin{equation*}
\begin{array}{rlll}
d_{dis}(x,y)=%
\begin{cases}
\top, & \text{$x=y$} \\
\bot , & \text{$x \not= y$}%
\end{cases}
&  &  &
\end{array}%
\end{equation*} 	
\end{proposition}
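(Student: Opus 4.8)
The statement has two halves, and the plan is to verify them in turn: that the whole fibre $\textbf{L-MET(X)}$ is an $L$-gauge carrying the universal property of the discrete object, and that the single $L$-metric $d_{dis}$ is a base generating it. For the first half I would check the four axioms of an $L$-gauge for $\mathcal{G}_{dis}=\textbf{L-MET(X)}$. Non-emptiness (i), upward closure (ii), and local saturation (iv) are immediate, since the full fibre contains every $L$-metric lying above any of its members and every $L$-metric locally supported by it. The only genuine computation is closure under binary meets (iii): for $d,e\in\textbf{L-MET(X)}$ one has $(d\wedge e)(x,x)=\top$, and since $*$ preserves suprema and is therefore monotone, the triangle inequalities for $d$ and $e$ give $(d\wedge e)(x,y)*(d\wedge e)(y,z)\leq d(x,z)\wedge e(x,z)=(d\wedge e)(x,z)$, so $d\wedge e$ is again an $L$-metric.

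For discreteness I would argue through the fibre order. Testing the identity in the morphism condition shows that $\mathrm{id}\colon(X,\mathcal{G}_{1})\to(X,\mathcal{G}_{2})$ is an $L$-gauge morphism exactly when $\mathcal{G}_{2}\subseteq\mathcal{G}_{1}$, so the discrete structure is forced to be the largest $L$-gauge on $X$, namely the whole fibre. Concretely, for any $L$-gauge space $(Y,\mathcal{G}')$ and any function $f\colon X\to Y$, each pullback $d'\circ(f\times f)$ with $d'\in\mathcal{G}'$ satisfies $d'(f(x),f(x))=\top$ together with the triangle inequality, hence lies in $\textbf{L-MET(X)}$; thus every such $f$ is automatically an $L$-gauge morphism, which is precisely the universal property of $\mathcal{G}_{dis}$.

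For the base I would first note that $d_{dis}$ is an $L$-metric: the only case of the triangle inequality needing attention is $x\neq z$, where one of the two left-hand factors equals $\bot$ and $\bot*\beta=\beta*\bot=\bot$ (the empty-supremum case of distributivity) makes the product $\bot$. Moreover $d_{dis}$ is the least element of $\textbf{L-MET(X)}$, since $d_{dis}(x,y)=\bot\leq d(x,y)$ off the diagonal and $d_{dis}(x,x)=\top=d(x,x)$ for every $L$-metric $d$. To apply Proposition \ref{locally directed L-gauge} I would check that $\{d_{dis}\}$ is locally directed: its subsets are $\emptyset$, whose meet is the top $L$-metric $\top$, and $\{d_{dis}\}$ itself, and in both cases the meet is locally supported by $\{d_{dis}\}$ via $e=d_{dis}$, using $\bot*\alpha=\bot$ and $\top*\alpha\leq\top$ pointwise. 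Hence $\tilde{\{d_{dis}\}}$ is an $L$-gauge with $\{d_{dis}\}$ as base; since it contains $d_{dis}$, upward closure (ii) forces it to contain every $L$-metric above $d_{dis}$, i.e.\ all of $\textbf{L-MET(X)}$, and the reverse inclusion is trivial, so $\tilde{\{d_{dis}\}}=\mathcal{G}_{dis}$.

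I expect the only delicate point to be the bookkeeping in the last step: keeping the quantifiers $\alpha\lhd\top$ and $\bot\prec\omega$ of the local-support condition straight, treating the empty-subset meet $\bigwedge_{d\in\emptyset}d=\top$ correctly when verifying local directedness, and using the product rules $\bot*\beta=\bot$ and $\top*\alpha\leq\top$ rather than any unneeded integrality hypothesis. Everything else is routine transfer of the reflexivity and triangle axioms along functions and finite meets.
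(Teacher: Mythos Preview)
Your proposal is correct and follows essentially the same route as the paper: show that $d_{dis}$ is the least $L$-metric, invoke Proposition~\ref{locally directed L-gauge} on the singleton $\{d_{dis}\}$ to obtain an $L$-gauge, identify it with the principal upset of $d_{dis}$ (hence all of $\textbf{L-MET(X)}$), and then verify the discrete universal property by noting that every pullback $d'\circ(f\times f)$ is an $L$-metric. Your treatment is in fact more careful than the paper's---you explicitly handle the empty-subset meet when checking local directedness and separately verify closure under binary meets for the full fibre---whereas the paper dismisses local directedness of a singleton in one line and never checks axiom~(iii) directly.
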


\begin{proof}
It can be easily seen that $d_{dis}$ is the smallest $L$-metric space on $X$. To show that $\mathcal{H} = \{d_{dis}\}$ is a $L$-gauge base, by proposition \ref{locally directed L-gauge}, it is sufficient to show that $\mathcal{H}$ is locally directed. Since a base consist of one element is always locally directed, $\mathcal{H} = \{d_{dis}\}$ is a $L$-gauge base and the generated $L$-gauge (by adding all locally supported $L$-metrics) is just the principle filters of $d$, i.e., $\mathcal{G} = \{e\in \textbf{L-MET(X)}: e\geq d_{dis}\}$. Therefore, all $L$-metrics are in $L$-gauge. Thus, $\mathcal{G} = \textbf{L-MET(X)}$. Furthermore, for any given $f:(X, \mathcal{G} = \textbf{L-MET(X)}) \rightarrow (X', \mathcal{G}')$ is a $L$-gauge morphism. Therefore, $\mathcal{G}_{dis}=\textbf{L-MET(X)}$.	
\end{proof}

\section{Local $T_{0}$ and $T_{1}$ Quantale-valued Approach Spaces}

	Let $X$ be a set and $p$ be a point in $X$. Let $X \amalg X$ be the coproduct of $X$. The \textit{wedge product} (``one point union'') of $X$ at $p$ is a pushout, and denoted by $X \vee_{p} X$. 

A point $x$ in $X \vee_{p} X$ is denoted as $x_{1}$ if it lies in first component and as $x_{2}$ if it lies in second component. 

Let $X^{2}$ be the cartesian product of $X$.

\begin{definition} (cf. \cite{BaranSep})
	A map $A_{p}: X \vee_{p} X \rightarrow X^{2}$ is called \textit{principal $p$-axis map} if
	\begin{equation*}
	\begin{array}{rlll}
	A_{p}(x_{i})=%
	\begin{cases}
	(x,p), & \text{ $i=1$} \\
	(p,x) , & \text{$i=2$}%
	\end{cases}
	&  &  &
	\end{array}%
	\end{equation*}
\end{definition}

\begin{definition}
	A map $S_{p}: X \vee_{p} X \rightarrow X^{2}$ is called \textit{skewed $p$-axis map} if
	\begin{equation*}
	\begin{array}{rlll}
	S_{p}(x_{i})=%
	\begin{cases}
	(x,x), & \text{ $i=1$} \\
	(p,x) , & \text{$i=2$}%
	\end{cases}
	&  &  &
	\end{array}%
	\end{equation*}
\end{definition}
\begin{definition}
	A map $\nabla_{p}: X \vee_{p} X \rightarrow X$ is called \textit{folding map at $p$} if  $\nabla_{p}(x_{i}) = x$ for $i=1,2$.
\end{definition}

Recall \cite{Category, Preuss, PreussCat}, that a functor $\mathcal{U}: \mathcal{E} \rightarrow \mathbf{Set}$ is called topological if $\mathcal{U}$ is concrete, consists of small fibers and each $\mathcal{U}$-source has an initial lift.

Note that a topological functor has a left adjoint called the discrete functor.

\begin{theorem} (cf. \cite{BaranSep}) \label{T0veT1Def}
	Let $U:\mathcal{E}\rightarrow \mathbf{Set}$ be
	topological, $X \in Ob(\mathcal{E})$ with $U(X)=Y$ and $p \in Y$.
	
	\begin{enumerate}
		\item [(i)] $X$ is $\overline{T_{0}}$ at $p$ iff the initial lift of the $U$%
		-source $\{A_{p}: Y \vee_{p} Y \rightarrow U(X^{2})=Y^{2}$ and $\nabla_{p}:Y\vee_{p} Y \rightarrow UD(Y)=Y \}$ is discrete, where $D$ is the discrete functor.
		\item[(ii)] $X$ is $T_{1}$ at $p$ iff the initial lift of the $U$%
		-source $\{S_{p}: Y \vee_{p} Y \rightarrow U(X^{2})=Y^{2}$ and $\nabla_{p}:Y\vee_{p} Y \rightarrow UD(Y)=Y \}$ is discrete, where $D$ is the discrete functor.
	\end{enumerate}	
\end{theorem}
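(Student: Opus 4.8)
The plan is to read the statement as Baran's categorical reformulation of the classical $T_0$ and $T_1$ axioms and to verify it from the defining universal properties. Since $U$ is topological, every $U$-source admits a unique initial lift, and the discrete functor $D$ exists as the left adjoint of $U$ (so that maps out of a discrete object are free set maps); this is essentially all the structure the argument requires. First I would make the underlying set of $Y \vee_p Y$ explicit: as the pushout of $Y \leftarrow \{p\} \rightarrow Y$ it consists of the basepoint $p$ together with two labelled copies $x_1, x_2$ of each $x \neq p$, on which $A_p$, $S_p$ and $\nabla_p$ act as prescribed in the preceding definitions.

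Next I would form the initial lift of the source $\{A_p, \nabla_p\}$ (respectively $\{S_p, \nabla_p\}$) and compare it with the discrete object $D(Y \vee_p Y)$. Both legs contribute constraints: $\nabla_p$ maps into the discrete object $D(Y)$, while $A_p$ (resp.\ $S_p$) pulls back the structure of the product $X^2$; the decisive question is whether their combination separates the two copies $x_1, x_2$ of each $x \neq p$ as sharply as the discrete structure does. Writing these constraints out, I expect ``the initial lift is discrete'' to reduce to the demand that the pulled-back product structure distinguish $(x,p)$ from $(p,x)$ for every $x \neq p$ in the $A_p$ case, and $(x,x)$ from $(p,x)$ in the $S_p$ case; these are precisely the pointwise separations of $p$ from the other points that the $\overline{T_0}$ and $T_1$ axioms encode. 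I would then prove both implications from the universal property of the initial lift.

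The step I expect to be the main obstacle is the faithful two-way translation between ``the initial lift coincides with $D(Y \vee_p Y)$'' and the pointwise separation condition: one must verify that discreteness is equivalent to, and not merely implied by, separation, which means comparing the initial structure against the full discrete object rather than against some intermediate structure. Because the statement is asserted for an arbitrary topological functor $U$ and is due to Baran, I would ultimately appeal to the verification in \cite{BaranSep}; in the concrete $L$-gauge realizations used later in the paper the same computation specializes through the explicit initial-lift description of Lemma \ref{initialStr} and the discrete structure identified in Proposition \ref{gdis}.
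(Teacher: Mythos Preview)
The paper does not prove this statement at all: it is recalled from \cite{BaranSep} with a ``cf.'' citation and no proof, and later in the text it is even invoked as ``By Definition~\ref{T0veT1Def}''. In Baran's framework the initial-lift conditions in (i) and (ii) are the \emph{definitions} of ``$\overline{T_0}$ at $p$'' and ``$T_1$ at $p$'' in an arbitrary topological category; there is no prior categorical notion against which they are being shown equivalent. The subsequent Remark, noting that in $\mathbf{Top}$ these reduce to the classical $T_0$ and $T_1$ axioms, is precisely the sanity check you are trying to promote to a general theorem, but that reduction is a special case, not the content of Theorem~\ref{T0veT1Def} itself.

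Your proposal therefore aims at the wrong target. You set out to prove that ``the initial lift is discrete'' is equivalent to a pointwise separation condition distinguishing $(x,p)$ from $(p,x)$, etc., but in a general topological category $\mathcal{E}$ over $\mathbf{Set}$ there is no intrinsic meaning to ``pointwise separation'' independent of the very initial-lift condition you are analyzing. The concrete unpacking you sketch---pulling back the product structure along $A_p$ or $S_p$ and asking whether the two copies $x_1,x_2$ become separated---is exactly what the paper carries out \emph{in the specific model} $\mathbf{L\text{-}GS}$ in Theorems~\ref{local T0 L-App} and the $T_1$ analogue, using Lemma~\ref{initialStr} and Proposition~\ref{gdis}. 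So the work you outline is real, but it belongs to those later theorems, not to Theorem~\ref{T0veT1Def}; for the statement at hand the correct ``proof'' is simply the citation to \cite{BaranSep}, which you do eventually acknowledge.
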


\begin{remark}
	In $\mathbf{Top}$ (category of topological spaces and continuous map), $\overline{T_{0}}$ (resp. $T_{1}$) reduces to usual $\mathbf{T_{0}}$ (resp. $\mathbf{T_{1}}$), and $(X, \tau)$ is $\mathbf{T_{0}}$ (resp. $\mathbf{T_{1}}$) at $p$ iff $(X, \tau)$ is $\mathbf{T_{0}}$ (resp. $\mathbf{T_{1}}$) for all $p \in X$ \cite{BaranSepBalk}.
\end{remark}

\begin{theorem} \label{local T0 L-App}
	Let $(X,\mathcal{G})$ be an L-gauge space and $p \in X$. $(X,\mathcal{G})$ is $\overline{T_{0}}$ at $p$ if and only if for all $x\in X$ with $x\neq p$, there exists $d\in \mathcal{G}$ such that $d(x,p)=\bot $ or $d(p,x)=\bot $.
\end{theorem}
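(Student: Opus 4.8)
The plan is to unwind Theorem \ref{T0veT1Def}(i): $(X,\mathcal{G})$ is $\overline{T_{0}}$ at $p$ exactly when the initial $L$-gauge $\bar{\mathcal{G}}$ on $X\vee_{p}X$ induced by the source $\{A_{p}:X\vee_{p}X\rightarrow (X^{2},\mathcal{G}^{2}),\ \nabla_{p}:X\vee_{p}X\rightarrow (X,\textbf{L-MET(X)})\}$ is discrete, where $\mathcal{G}^{2}$ is the product $L$-gauge and the codomain of $\nabla_{p}$ carries the discrete structure of Proposition \ref{gdis}. By Lemma \ref{initialStr}(i), a base $\mathcal{H}$ for $\bar{\mathcal{G}}$ consists of finite meets of the pullbacks $(d_{1}\wedge d_{2})\circ(A_{p}\times A_{p})$ (with $d_{1},d_{2}\in\mathcal{G}$, since a base $L$-metric of $\mathcal{G}^{2}$ has the form $((a,b),(a',b'))\mapsto d_{1}(a,a')\wedge d_{2}(b,b')$) and $g:=d_{dis}\circ(\nabla_{p}\times\nabla_{p})$. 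Since $\bar{\mathcal{G}}=\tilde{\mathcal{H}}$ is locally saturated and upward closed, and since by Proposition \ref{gdis} the discrete gauge is generated by the discrete $L$-metric $d_{dis}$ on $X\vee_{p}X$, discreteness of $\bar{\mathcal{G}}$ is equivalent to $d_{dis}\in\bar{\mathcal{G}}$, i.e.\ to $d_{dis}$ being locally supported by $\mathcal{H}$.

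First I would record the two pullback computations. For the folding map, $g(u,v)=d_{dis}(\nabla_{p}u,\nabla_{p}v)$ equals $\bot$ as soon as $u,v$ have different images under $\nabla_{p}$; since $\nabla_{p}(x_{i})=x$, this already separates every off-diagonal pair except $\{x_{1},x_{2}\}$ with $x\neq p$. For the principal axis map, using $A_{p}(x_{1})=(x,p)$ and $A_{p}(x_{2})=(p,x)$ one gets $((d_{1}\wedge d_{2})\circ(A_{p}\times A_{p}))(x_{1},x_{2})=d_{1}(x,p)\wedge d_{2}(p,x)$. Thus the whole question reduces to whether, for each $x\neq p$, the pair $(x_{1},x_{2})$ can be driven to $\bot$ by a base element.

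For $(\Leftarrow)$: given $x\neq p$ and $d\in\mathcal{G}$ with, say, $d(x,p)=\bot$, I take $d_{1}=d$ and $d_{2}$ the top $L$-metric (which lies in $\mathcal{G}$ by upward closure, and is an $L$-metric by integrality since $\top*\top=\top$), and form $e:=[(d_{1}\wedge d_{2})\circ(A_{p}\times A_{p})]\wedge g\in\mathcal{H}$. Then $e(x_{1},x_{2})=d(x,p)\wedge\top\wedge\top=\bot$; for every $v\neq x_{1},x_{2}$ the folding factor gives $g(x_{1},v)=\bot$, so $e(x_{1},v)=\bot$ for all $v\neq x_{1}$, while $e(x_{1},x_{1})=\top$. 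Hence $e(x_{1},\cdot)$ is exactly the $x_{1}$-row of $d_{dis}$, so $e(x_{1},v)*\alpha\leq d_{dis}(x_{1},v)\vee\omega$ for all $v,\alpha,\omega$, giving local support at $x_{1}$ (the other copies are symmetric, and the wedge point $p$ is handled by $g$ alone). Therefore $d_{dis}\in\bar{\mathcal{G}}$ and $\bar{\mathcal{G}}$ is discrete.

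For $(\Rightarrow)$ I would argue contrapositively. If $\bar{\mathcal{G}}$ is discrete then $d_{dis}$ is locally supported by $\mathcal{H}$; specialising local support at $u=x_{1}$ to the entry $v=x_{2}$ yields, for every $\alpha\lhd\top$ and $\bot\prec\omega$, a base element with $(d_{1}(x,p)\wedge d_{2}(p,x))*\alpha\leq\omega$. The main obstacle is to convert this family of approximate inequalities into a single $d\in\mathcal{G}$ with $d(x,p)=\bot$ or $d(p,x)=\bot$: a priori $d_{1}(x,p)$ and $d_{2}(p,x)$ could each be $\neq\bot$ yet meet to a small element, and the relevant infimum over the gauge need not be attained by one metric. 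This is precisely where the completely distributive quantale structure governing $\alpha\lhd\top$ and $\bot\prec\omega$, together with local saturation of $\mathcal{G}$, must be invoked to realise the limiting value by an actual member of $\mathcal{G}$; I expect this attainment step, rather than the bookkeeping on the wedge, to be the real content of the statement.
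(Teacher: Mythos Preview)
Your setup and your $(\Leftarrow)$ argument are the paper's argument, only phrased more carefully through local support of $d_{dis}$ by the initial basis. The paper likewise splits $u\neq v$ into the case $\nabla_{p}(u)\neq\nabla_{p}(v)$ (killed by the folding pullback) and the case $u=x_{1},v=x_{2}$ (killed by the hypothesis on $d$), and concludes that the resulting basis element is the discrete $L$-metric on $X\vee_{p}X$. Your version is in fact cleaner, because it makes explicit that the witnessing basis element may vary with $x$, a point the paper glosses over by writing a single $\bar d$.

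For $(\Rightarrow)$, however, the paper does \emph{not} carry out the attainment step you anticipate. It simply takes the specific basis element built from $d_{dis}\circ(\nabla_{p}\times\nabla_{p})$ and $d\circ(\pi_{i}A_{p}\times\pi_{i}A_{p})$ for one $d$, identifies it with the discrete metric $\bar d_{dis}$ (on the grounds that the initial lift is discrete), reads off
\[
\bot=\bar d_{dis}(x_{1},x_{2})=\bigwedge\{\top,\,d(x,p),\,d(p,x)\},
\]
and immediately concludes ``consequently, $d(x,p)=\bot$ or $d(p,x)=\bot$''. There is no use of local saturation of $\mathcal{G}$, no approximation in $\alpha,\omega$, and no argument that the infimum is realised by a single gauge element. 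Your diagnosis that this is the delicate point is correct, but the resolution you expect is absent: the paper treats both the identification of $\bar d_{dis}$ with a single basis element and the step $a\wedge b=\bot\Rightarrow(a=\bot\text{ or }b=\bot)$ as immediate. In a general completely distributive quantale neither is automatic (e.g.\ $L=[0,1]^{2}$ with $*=\wedge$), so what you flagged as ``the real content'' is in fact an unaddressed gap in the paper's proof rather than a hidden argument you missed; it evaporates only under an extra hypothesis such as $L$ being linearly ordered, which covers the intended examples $[0,\infty]$ and $[0,1]$.
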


\begin{proof}
Suppose $(X,\mathcal{G})$ be $\overline{T_{0}}$ at $p$, $x\in X$ and $x\neq p$. Let $\overline{\mathcal{G}}$ be the initial L-gauge on $X\vee _{p}X$ induced by $A_{p}:X\vee _{p}X\rightarrow U(X^{2},\mathcal{G}^{2})=X^{2}$ and $\nabla _{p}:X\vee _{p}X\rightarrow U(X,\mathcal{G}_{dis})=X$ where $\mathcal{G}_{dis}$ is discrete structure on $X$ and $\mathcal{G}^{2}$ is product structure on $X^{2}$ induced by $\pi_{i}:X^{2} \rightarrow X$ projection maps for $i=1,2$. Suppose that $\mathcal{H}_{dis}=\{d_{dis}\}$ is a basis for discrete L-gauge where $d_{dis}$ is the discrete L-metric on $X$. Let $\mathcal{H}$ be an L-gauge basis of $\mathcal{G}$ and $d\in \mathcal{H}$, and $\overline{\mathcal{H}}=\{\overline{d}_{dis}\}$ be the initial L-gauge basis of $\overline{\mathcal{G}}$ where $\overline{d}_{dis}$ is the discrete L-metric on $X\vee _{p}X$. For $x_{1}, x_{2} \in X\vee _{p}X$ with $x_{1} \neq x_{2}$, note that
\begin{equation*}
d_{dis}(\nabla _{p}(x_{1}),\nabla _{p}(x_{2}))=d_{dis}(x,x)=\top
\end{equation*}
\begin{equation*}
d(\pi _{1}A_{p}(x_{1}),\pi _{1}A_{p}(x_{2}))=d(x,p)
\end{equation*}
\begin{equation*}
d(\pi _{2}A_{p}(x_{1}),\pi _{2}A_{p}(x_{2}))=d(p,x)
\end{equation*}
Since $x_{1}\neq x_{2}$, $\overline{d}_{dis}$ is the discrete L-metric on $X\vee _{p}X$ and $(X,\mathcal{G})$ is $\overline{T_{0}}$ at $p$, by Lemma \ref{initialStr}
\begin{eqnarray*}
	\bot &=&\overline{d}_{dis}(x_{1},x_{2}) \\
	&=&\bigwedge\ \{d_{dis}(\nabla _{p}(x_{1}),\nabla _{p}(x_{2})),d(\pi
	_{1}A_{p}(x_{1}),\pi _{1}A_{p}(x_{2})), \\
	& & \hspace{8mm} d(\pi _{2}A_{p}(x_{1}),\pi_{2}A_{p}(x_{2}))\} \\
	&=&\bigwedge\ \{\top,d(x,p),d(p,x)\}
\end{eqnarray*}
and consequently, $d(x,p)=\bot $ or $d(p,x)=\bot $.
\newline

Conversely, let $\overline{\mathcal{H}}$ be initial L-gauge basis on $X \vee_{p} X$ induced by $A_{p}: X \vee_{p} X \rightarrow U(X^{2}, \mathcal{G}^{2})=X^{2}$ and $\nabla_{p}: X \vee_{p} X \rightarrow U(X, \mathcal{G}_{dis})=X$ where, by Proposition \ref{gdis}, $\mathcal{G}_{dis}=\textbf{L-MET(X)} $ discrete L-gauge on $X$ and $\mathcal{G}^{2}$ be the product structure on $X^{2}$ induced by $\pi_{i}:X^{2} \rightarrow X$ the projection maps for $i=1,2$.

Suppose for all $x\in X$ with $x\neq p$, there exists $d\in \mathcal{G}$ such that $d(x,p)=\bot $ or $d(p,x)=\bot $. Let $\overline{d} \in \overline{\mathcal{H}}$ and $u, v \in X \vee_{p} X$.
\newline

If $u=v$, then
	\begin{eqnarray*}
		\overline{d}(u,v) &=&\bigwedge\ \{d_{dis}(\nabla _{p}(u),\nabla _{p}(u)),d(\pi
		_{1}A_{p}(u),\pi _{1}A_{p}(u)), \\
		& & \hspace{8mm} d(\pi _{2}A_{p}(u),\pi _{2}A_{p}(u))\} \\
		&=&\top
	\end{eqnarray*}

If $u\neq v$ and $\nabla _{p}(u)\neq \nabla _{p}(v)$, then $d_{dis}(\nabla _{p}(u),\nabla _{p}(v))=\bot $ since $d_{dis}$ is discrete. By Lemma \ref{initialStr}
	\begin{eqnarray*}
		\overline{d}(u,v) &=&\bigwedge\ \{d_{dis}(\nabla _{p}(u),\nabla _{p}(v)),d(\pi
		_{1}A_{p}(u),\pi _{1}A_{p}(v)), \\ 
		& & \hspace{8mm} d(\pi _{2}A_{p}(u),\pi _{2}A_{p}(v))\} \\
		&=&\bigwedge\ \{\bot, d(\pi _{1}A_{p}(u),\pi _{1}A_{p}(v)),d(\pi
		_{2}A_{p}(u),\pi _{2}A_{p}(v))\} \\
		&=&\bot
	\end{eqnarray*}

Suppose $u\neq v$ and $\nabla _{p}(u)=\nabla _{p}(v)$. If $\nabla _{p}(u)=x=\nabla _{p}(v)$ for some $x\in X$ with $x\neq p$, then $u=x_{1}$ and $v=x_{2}$ or $u=x_{2}$ and $v=x_{1}$ since $u\neq v$. Let $u=x_{1}$ and $v=x_{2}$.
	\begin{eqnarray*}
	d_{dis}(\nabla _{p}(u),\nabla _{p}(v)) &=&d_{dis}(\nabla _{p}(x_{1}),\nabla _{p}(x_{2})) \\
	&=& d_{dis}(x,x) \\
	&=&\top
	\end{eqnarray*}
	\begin{eqnarray*}
		d(\pi _{1}A_{p}(u),\pi _{1}A_{p}(v)) &=&d(\pi _{1}A_{p}(x_{1}),\pi
		_{1}A_{p}(x_{2})) \\
		&=&d(x,p)
	\end{eqnarray*}
	
	and
	\begin{eqnarray*}	
		d(\pi _{2}A_{p}(u),\pi _{2}A_{p}(v)) &=&d(\pi _{2}A_{p}(x_{1}),\pi _{2}A_{p}(x_{2})) \\
		&=&d(p,x)
	\end{eqnarray*}
	
	It follows that
	\begin{eqnarray*}
		\overline{d}(u,v) &=&\overline{d}(x_{1},x_{2}) \\
		&=&\bigwedge\ \{d_{dis}(\nabla _{p}(x_{1}),\nabla _{p}(x_{2})),d(\pi
		_{1}A_{p}(x_{1}),\pi _{1}A_{p}(x_{2})), \\
		& & \hspace{8mm} d(\pi _{2}A_{p}(x_{1}),\pi_{2}A_{p}(x_{2}))\} \\
		&=&\bigwedge\ \{\top,d(x,p),d(p,x)\}.
	\end{eqnarray*}
	By the assumption $d(x,p)$ $=\bot $ or $d(p,x)=\bot $, we get $\overline{d}(u,v)=\bot $.
	\newline
	
	Let $u=x_{2}$ and $v=x_{1}$. Similarly,
	\begin{eqnarray*}
		d_{dis}(\nabla _{p}(u),\nabla _{p}(v)) &=&d_{dis}(\nabla _{p}(x_{2}),\nabla _{p}(x_{1})) \\
		&=& d_{dis}(x,x) \\
		&=&\top
	\end{eqnarray*}
	\begin{eqnarray*}
		d(\pi _{1}A_{p}(u),\pi _{1}A_{p}(v)) &=&d(\pi _{1}A_{p}(x_{2}),\pi _{1}A_{p}(x_{1})) \\
		&=&d(p,x)
	\end{eqnarray*}
	
	and
	\begin{eqnarray*}	
		d(\pi _{2}A_{p}(u),\pi _{2}A_{p}(v)) &=&d(\pi _{2}A_{p}(x_{2}),\pi _{2}A_{p}(x_{1})) \\
		&=&d(x,p)
	\end{eqnarray*}
	
	It follows that
	\begin{eqnarray*}
		\overline{d}(u,v) &=&\overline{d}(x_{2},x_{1}) \\
		&=&\bigwedge\ \{d_{dis}(\nabla _{p}(x_{2}),\nabla _{p}(x_{1})),d(\pi
		_{1}A_{p}(x_{2}),\pi _{1}A_{p}(x_{1})), \\
		& & \hspace{8mm} d(\pi _{2}A_{p}(x_{2}),\pi_{2}A_{p}(x_{1}))\} \\
		&=&\bigwedge\ \{\top,d(p,x),d(x,p)\}.
	\end{eqnarray*}
	By the assumption $d(x,p)$ $=\bot $ or $d(p,x)=\bot $, we get $\overline{d}(u,v)=\bot $.
	\newline

Therefore, $\forall u,v\in X\vee _{p}X$, we have
\begin{equation*}
\begin{array}{r}
\overline{d}(u,v)=
\begin{cases}
\top, & \text{$u=v$} \\
\bot, & \text{$u\neq v$}
\end{cases}%
\end{array}%
\end{equation*}
and by the assumption $\overline{d}$ is the discrete L-metric on $X\vee _{p}X$, i.e., $\overline{\mathcal{H}}=\{\overline{d}\}$, which means $\mathcal{G}_{dis}=\textbf{L-MET(X)} $. By Definition \ref{T0veT1Def} (i), $(X,\mathcal{G})$ is $\overline{T_{0}}$ at $p$.			
\end{proof}

\begin{theorem} \label{AllQuantaleT0App}
Let $(X, \mathfrak{B})$ be a $L$-approach space and $p \in X$. Then, the followings are equivalent.
\begin{enumerate}
	\item[(i)] $(X, \mathfrak{B})$ is $\overline{T_{0}}$ at $p$.
	\item[(ii)] $\forall$ $x \in X$ with $x \not= p$, there exists $d\in \mathcal{G}$ such that $d(x,p)=\bot $ or $d(p,x)=\bot $.
	\item[(iii)] $\forall$ $x \in X$ with $x \not= p$, $\delta(x, \{p\})= \bot$ or $\delta(p, \{x\})= \bot$.
	\item[(iv)] $\forall$ $x \in X$ with $x \not= p$, there exists $\varphi \in \mathcal{A}(p)$ such that $\varphi (x)=\bot $ or  there exists $\varphi \in \mathcal{A}(x)$ such that $\varphi (p)=\bot $.
\end{enumerate}	
\end{theorem}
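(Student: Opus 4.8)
The plan is to treat (i)$\Leftrightarrow$(ii) as already established, since it is exactly Theorem \ref{local T0 L-App}, and to obtain the remaining equivalences by transporting the single-pair condition ``$d(x,p)=\bot$'' across the transition formulas of Remark \ref{Transition}. Concretely, I would prove the cycle (ii)$\Rightarrow$(iv)$\Rightarrow$(iii) and then close it with (iii)$\Rightarrow$(ii); together with Theorem \ref{local T0 L-App} this makes all four statements coincide.

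The forward implications are short. For (ii)$\Rightarrow$(iv): if $d\in\mathcal{G}$ with $d(x,p)=\bot$, then by the approach-system-to-gauge transition the section $d(x,\cdot)$ lies in $\mathcal{A}(x)$, so $\varphi:=d(x,\cdot)$ satisfies $\varphi(p)=\bot$; the branch $d(p,x)=\bot$ gives $\varphi:=d(p,\cdot)\in\mathcal{A}(p)$ with $\varphi(x)=\bot$. For (iv)$\Rightarrow$(iii): if $\varphi\in\mathcal{A}(x)$ with $\varphi(p)=\bot$, then the transition $\delta(x,\{p\})=\bigwedge_{\psi\in\mathcal{A}(x)}\psi(p)\leq\varphi(p)=\bot$ gives $\delta(x,\{p\})=\bot$, and symmetrically on the other branch. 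The direct implication (ii)$\Rightarrow$(iii) is likewise immediate from $\delta(x,\{p\})=\bigwedge_{d\in\mathcal{G}}d(x,p)$, which is $\bot$ as soon as a single $d$ attains $\bot$ there.

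The substantive step is the reverse passage (iii)$\Rightarrow$(ii): from $\delta(x,\{p\})=\bot$ I must manufacture a single $d\in\mathcal{G}$ with $d(x,p)=\bot$ (equivalently a single $\varphi\in\mathcal{A}(x)$ with $\varphi(p)=\bot$). The natural candidate is the pointwise distance $d^{*}(z,y):=\delta(z,\{y\})$, which is a genuine $L$-metric: $d^{*}(z,z)=\delta(z,\{z\})=\top$ by axiom (i) of an $L$-approach, and the triangle inequality $\delta(z,\{y\})*\delta(y,\{w\})\leq\delta(z,\{w\})$ follows from axiom (iv) applied with $A=\{w\}$ and $\alpha=\delta(y,\{w\})$, since then $y\in\overline{\{w\}}^{\alpha}$ and monotonicity of $\delta$ in its second argument (a consequence of axiom (iii)) gives $\delta(z,\overline{\{w\}}^{\alpha})\geq\delta(z,\{y\})$. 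This $d^{*}$ has $d^{*}(x,p)=\bot$, so it would be the desired witness once it is shown to lie in $\mathcal{G}$.

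Showing $d^{*}\in\mathcal{G}$ (equivalently $\delta(x,\{\cdot\})\in\mathcal{A}(x)$) is where the difficulty concentrates, and I expect it to be the main obstacle. The literal domination criterion $\delta(z,A)\leq\bigvee_{a\in A}\delta(z,\{a\})$ holds with equality only for finite $A$; for infinite $A$ one has in general only the reverse inequality $\bigvee_{a\in A}\delta(z,\{a\})\leq\delta(z,A)$, so $d^{*}$ need not dominate $\delta$ outright. The way around this is the \emph{local saturation} of $\mathcal{G}$ (axiom (iv) of an $L$-gauge): instead of checking domination directly, I would verify that $d^{*}$ is locally supported by $\mathcal{G}$, i.e.\ that for every $z$, every $\alpha\lhd\top$ and every $\bot\prec\omega$ there is $e\in\mathcal{G}$ with $e(z,\cdot)*\alpha\leq d^{*}(z,\cdot)\vee\omega$, and then conclude $d^{*}\in\mathcal{G}$ by saturation. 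Producing such an $e$ \emph{uniformly in} $y$ is exactly where complete distributivity of $L$ should enter, the relations $\alpha\lhd\top$ and $\bot\prec\omega$ converting the infinitary identity $\delta(z,y)=\bigwedge_{e\in\mathcal{G}}e(z,y)$ into an approximate condition meetable by one element of the finite-meet-closed gauge. The parallel argument in the approach-system representation, using the saturation of $\mathcal{A}(x)$, yields $\varphi\in\mathcal{A}(x)$ with $\varphi(p)=\bot$, which closes the cycle and hence proves the theorem.
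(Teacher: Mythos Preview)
Your decomposition is sound and in several places cleaner than the paper's. You route the cycle as (ii)$\Rightarrow$(iv)$\Rightarrow$(iii)$\Rightarrow$(ii), whereas the paper goes (ii)$\Rightarrow$(iii)$\Rightarrow$(iv)$\Rightarrow$(ii). Your (ii)$\Rightarrow$(iv) via $d(x,\cdot)\in\mathcal{A}(x)$ (Remark~\ref{Transition}(iii)) and your (iv)$\Rightarrow$(iii) via $\delta(x,\{p\})=\bigwedge_{\psi\in\mathcal{A}(x)}\psi(p)$ (Remark~\ref{Transition}(v)) are both correct one-liners, and more transparent than the paper's corresponding steps. The paper's (iii)$\Rightarrow$(iv) just asserts ``in particular there exists $\varphi\in\mathcal{A}(x)$ with $\varphi(p)=\bot$''; that is in fact easy---take $\varphi$ with $\varphi(p)=\bot$ and $\varphi\equiv\top$ elsewhere, and check membership in $\mathcal{A}(x)$ from Remark~\ref{Transition}(ii)---but the paper does not spell it out, and your route avoids needing it altogether.

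Where both arguments meet is the one genuinely nontrivial implication: returning to (ii). One must produce a \emph{single} $d\in\mathcal{G}$ with $d(x,p)=\bot$ knowing only $\bigwedge_{d\in\mathcal{G}}d(x,p)=\bot$. You isolate this correctly, propose the natural candidate $d^{*}(z,y)=\delta(z,\{y\})$, observe that the domination criterion of Remark~\ref{Transition}(iv) fails for infinite $A$, and plan to recover $d^{*}\in\mathcal{G}$ via local saturation---but you leave the uniform-in-$y$ choice of the supporting $e$ as an unproved expectation. The paper's handling of its mirror step (iv)$\Rightarrow$(ii) is no stronger: it invokes Remark~\ref{Transition}(iii) and the support condition of Definition~\ref{L-App Base}, then silently identifies the support-witness $\varphi$ with the hypothesised $\varphi$ having $\varphi(p)=\bot$, derives the triviality $\bot\leq d'(x,p)\vee\omega$, and declares ``in particular, there exists a $d\in\mathcal{G}$ such that $d(x,p)=\bot$'' without further justification. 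So on the only substantive point your sketch is at least as detailed as the paper's own argument; neither actually proves that the infimum is attained by an element of $\mathcal{G}$. If you want to close this gap cleanly, the local-saturation route you outline is the right one, but the uniformity step needs to be written out in full rather than deferred to ``complete distributivity should enter''.
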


\begin{proof}
	 $(i) \Leftrightarrow (ii)$ follows from Theorem \ref{local T0 L-App}.
	 
	 $(ii) \Rightarrow (iii)$ Suppose $\forall x \in X$ with $x \not = p$, there exists $d \in \mathcal{G}$ such that $d(x, p)= \bot$ or $d(p, x)= \bot$. By Remark \ref{Transition} (i), $\delta(x, \{p\})= \bigwedge\limits_{d' \in \mathcal{G}} d'(x,p) = \bot$, and consequently, $\delta(x, \{p\}) = \bot$. Similarly, $\delta(p, \{x\})= \bigwedge\limits_{d' \in \mathcal{G}} d'(p,x) = \bot$ implies $\delta(p, \{x\}) = \bot$.
	 
	 $(iii) \Rightarrow (iv)$ Suppose $\forall x \in X$ with $x \not = p$, $\delta(x, \{p\}) = \bot$ or $\delta(p, \{x\}) = \bot$. Let $A = \{p\}$, then by Remark \ref{Transition} (ii), $\forall \varphi' \in L^{X}$, $\delta(x, \{p\}) \leq \varphi'(p)$. In particular, there exists a $\varphi \in \mathcal{A}(x)$ such that $\varphi(p)= \bot$. Similarly, if $A = \{x\}$, then by Remark \ref{Transition} (ii), $\forall \varphi'' \in L^{X}$, $\delta(p, \{x\}) \leq \varphi''(x)$ and particularly, there exists a $\varphi \in \mathcal{A}(p)$ such that $\varphi(x)= \bot$.
	 
	 $(iv) \Rightarrow (ii)$ Suppose the condition holds, i.e., $\forall x \in X$ with $x \not = p$, there exists $\varphi \in \mathcal{A}(p)$ such that $\varphi (x)=\bot $ or  there exists $\varphi \in \mathcal{A}(x)$ such that $\varphi (p)=\bot $. By Remark \ref{Transition} (iii), $\forall x \in X$; $d'(x, p) \in \mathcal{A}(x)$, by Definition \ref{L-App Base}, $\forall x \in X$, $\alpha \lhd \top$, $\bot \prec \omega$ there exists $\varphi \in \mathcal{B}(x)$ such that $\varphi(p)*\alpha \leq d'(x, p) \vee \omega$. Since $\varphi(p) = \bot$ and $\bot * \alpha = \bot$, $\bot \leq d'(x, p) \vee \omega$, and in particular, there exists a $d \in \mathcal{G}$ such that $d(x, p) = \bot$. In a similar way, there exists a $d \in \mathcal{G}$ such that $d(p, x) = \bot$.
	 	
\end{proof}

\begin{theorem}
	Let $(X,\mathcal{G})$ be an L-gauge space and $p \in X$. $(X,\mathcal{G})$ is $T_{1}$ at $p$ if and only if for all $x\in X$ with $x\neq p$, there exists $d\in \mathcal{G}$ such that $d(x,p)=\bot =d(p,x)$.
\end{theorem}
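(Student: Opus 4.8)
The plan is to follow the template of Theorem \ref{local T0 L-App} almost verbatim, replacing the principal $p$-axis map $A_{p}$ by the skewed $p$-axis map $S_{p}$ and invoking the $T_{1}$ criterion of Theorem \ref{T0veT1Def}(ii) in place of the $\overline{T_{0}}$ criterion. Concretely, I form the initial $L$-gauge $\overline{\mathcal{G}}$ on $X \vee_{p} X$ induced by the source $S_{p}: X \vee_{p} X \rightarrow U(X^{2}, \mathcal{G}^{2}) = X^{2}$ and $\nabla_{p}: X \vee_{p} X \rightarrow U(X, \mathcal{G}_{dis}) = X$, where $\mathcal{G}_{dis} = \textbf{L-MET(X)}$ by Proposition \ref{gdis} and $\mathcal{G}^{2}$ is the product structure induced by the projections $\pi_{1}, \pi_{2}$. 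By Lemma \ref{initialStr}(i) a typical basis member of $\overline{\mathcal{G}}$ has the form $\overline{d}(u,v) = \bigwedge\{d_{dis}(\nabla_{p}(u), \nabla_{p}(v)), d(\pi_{1}S_{p}(u), \pi_{1}S_{p}(v)), d(\pi_{2}S_{p}(u), \pi_{2}S_{p}(v))\}$ for $d$ in a basis of $\mathcal{G}$.

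The decisive computation is the behaviour of $S_{p}$ on the glued fibre. For $x \neq p$ one has $\nabla_{p}(x_{1}) = \nabla_{p}(x_{2}) = x$, while $\pi_{1}S_{p}(x_{1}) = x$, $\pi_{1}S_{p}(x_{2}) = p$, $\pi_{2}S_{p}(x_{1}) = x$ and $\pi_{2}S_{p}(x_{2}) = x$. Hence the second-projection term becomes $d(x,x) = \top$ and does not interact with the first, so the meet collapses to a single entry: $\overline{d}(x_{1}, x_{2}) = \bigwedge\{\top, d(x,p), \top\} = d(x,p)$ and, symmetrically, $\overline{d}(x_{2}, x_{1}) = d(p,x)$. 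This is the only essential departure from Theorem \ref{local T0 L-App}, where $A_{p}$ yields $\overline{d}(x_{1}, x_{2}) = d(x,p) \wedge d(p,x)$ and the two orientations are merged into one meet.

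For the forward direction I assume $(X, \mathcal{G})$ is $T_{1}$ at $p$, so by Theorem \ref{T0veT1Def}(ii) the initial lift is discrete, i.e. $\overline{\mathcal{G}}$ has basis $\{\overline{d}_{dis}\}$ with $\overline{d}_{dis}$ the discrete $L$-metric on $X \vee_{p} X$. Evaluating at the distinct points $x_{1}, x_{2}$ and at $x_{2}, x_{1}$ and using the two identities above gives $\bot = \overline{d}_{dis}(x_{1}, x_{2}) = d(x,p)$ and $\bot = \overline{d}_{dis}(x_{2}, x_{1}) = d(p,x)$, which produces a $d \in \mathcal{G}$ with $d(x,p) = \bot = d(p,x)$. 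For the converse I assume this condition and verify, exactly as in Theorem \ref{local T0 L-App}, that every basis member $\overline{d}$ agrees with $\overline{d}_{dis}$: the case $u = v$ gives $\top$; the case $u \neq v$ with $\nabla_{p}(u) \neq \nabla_{p}(v)$ gives $\bot$ from discreteness of $d_{dis}$; and the case $u \neq v$ with $\nabla_{p}(u) = \nabla_{p}(v)$ forces $\{u,v\} = \{x_{1}, x_{2}\}$ for some $x \neq p$ (gluing at $p$ rules out $\nabla_{p}(u) = p$ when $u \neq v$), whereupon $\overline{d}(u,v)$ equals $d(x,p)$ or $d(p,x)$, both $\bot$ by hypothesis. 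Thus $\overline{d} = \overline{d}_{dis}$, the initial lift is discrete, and $(X, \mathcal{G})$ is $T_{1}$ at $p$ by Theorem \ref{T0veT1Def}(ii).

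The part that deserves care rather than difficulty is the quantifier order in the converse: the witness $d$ with $d(x,p) = \bot = d(p,x)$ is allowed to depend on $x$, and this is exactly what the passage to the locally saturated initial gauge permits, since discreteness is tested against $\overline{d}_{dis}$ through local support, where the supporting metric may be chosen pointwise. The genuinely instructive point is conceptual, not computational: because $S_{p}$ keeps the second coordinate equal to $x$ on both sheets, the symmetric meet of the $\overline{T_{0}}$ argument splits into two independent scalar conditions, explaining precisely why $T_{1}$ at $p$ strengthens $\overline{T_{0}}$ at $p$ from ``$d(x,p) = \bot$ or $d(p,x) = \bot$'' to ``$d(x,p) = \bot = d(p,x)$''.
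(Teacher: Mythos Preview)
Your proof is correct and follows essentially the same approach as the paper: form the initial $L$-gauge basis on $X\vee_{p}X$ via $S_{p}$ and $\nabla_{p}$, evaluate the typical basis element at $(x_{1},x_{2})$ and $(x_{2},x_{1})$ to isolate $d(x,p)$ and $d(p,x)$ separately (using that the $\pi_{2}S_{p}$-term collapses to $d(x,x)=\top$), and run the same three-case verification for the converse. Your closing remark about local support, allowing the witness $d$ to depend on $x$, is a welcome clarification of a quantifier subtlety that the paper's proof leaves implicit.
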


\begin{proof}
	Suppose that $(X,\mathcal{G})$ is $T_{1}$ at $p$, $x\in X$ and $x\neq p$. Let $u=x_{1}$, $v=x_{2} \in X \vee_{p} X$.
	Note that
	\begin{equation*}
	d_{dis}(\nabla_{p}(u),\nabla_{p}(v))=d_{dis}(\nabla_{p}(x_{1}),\nabla_{p}(x_{2}))= d_{dis}(x,x)=\top
	\end{equation*}
	\begin{equation*}
	d(\pi_{1}S_{p}(u),\pi_{1}S_{p}(v))=d(\pi_{1}S_{p}(x_{1}),\pi_{1}S_{p}(x_{2}))= d(x,p)
	\end{equation*}
	\begin{equation*}
	d(\pi_{2}S_{p}(u),\pi_{2}S_{p}(v))=d(\pi_{2}S_{p}(x_{1}),\pi_{2}S_{p}(x_{2}))= d(x,x)=\top
	\end{equation*}
	where $d_{dis}$ is the discrete L-metric on $X \vee_{p} X$ and $\pi_{i}: X^{2} \rightarrow X$ are the projection maps for $i=1,2$. Since $u\neq v$ and $(X,\mathcal{G})$ is $T_{1}$ at $p$, by Lemma \ref{initialStr}
	\begin{eqnarray*}
		\bot &=&\bigwedge\ \{d_{dis}(\nabla_{p}(u), \nabla_{p}(v)),d(\pi_{1}S_{p}(u), \pi_{1}S_{p}(v)), d(\pi_{2}S_{p}(u),\pi_{2}S_{p}(v))\} \\
		&=&\bigwedge\ \{\top,d(x,p)\} \\
		&=& d(x,p)
	\end{eqnarray*}
	and consequently, $d(x,p)=\bot $.
	\newline
	
	Let $u=x_{2}$, $v=x_{1} \in X \vee_{p} X$. Similarly,
	\begin{equation*}
	d_{dis}(\nabla_{p}(u),\nabla_{p}(v))=d_{dis}(\nabla_{p}(x_{2}),\nabla_{p}(x_{1}))= d_{dis}(x,x)=\top
	\end{equation*}
	\begin{equation*}
	d(\pi_{1}S_{p}(u),\pi_{1}S_{p}(v))=d(\pi_{1}S_{p}(x_{2}),\pi_{1}S_{p}(x_{1}))= d(p,x)
	\end{equation*}
	\begin{equation*}
	d(\pi_{2}S_{p}(u),\pi_{2}S_{p}(v))=d(\pi_{2}S_{p}(x_{2}),\pi_{2}S_{p}(x_{1}))= d(x,x)=\top
	\end{equation*}
	
	It follows that
	\begin{eqnarray*}
		\bot &=&\bigwedge\ \{d_{dis}(\nabla_{p}(u), \nabla_{p}(v)),d(\pi_{1}S_{p}(u), \pi_{1}S_{p}(v)), d(\pi_{2}S_{p}(u),\pi_{2}S_{p}(v))\} \\
		&=&\bigwedge\ \{\top,d(p,x)\} \\
		&=& d(p,x)
	\end{eqnarray*}
	and consequently, $d(p,x)=\bot $.
	\newline
	
	Conversely, let $\overline{\mathcal{H}}$ be initial L-gauge basis on $X \vee_{p} X$ induced by $S_{p}: X \vee_{p} X \rightarrow U(X^{2}, \mathcal{G}^{2})=X^{2}$ and $\nabla_{p}: X \vee_{p} X \rightarrow U(X, \mathcal{G}_{dis})=X$ where, by Proposition \ref{gdis}, $\mathcal{G}_{dis}=\textbf{L-MET(X)} $ discrete L-gauge on $X$ and $\mathcal{G}^{2}$ is the product L-gauge structure on $X^{2}$ induced by $\pi_{i}:X^{2} \rightarrow X$ the projection maps for $i=1,2$. 
	
	Suppose for all $x\in X$ with $x\neq p$, there exists $d\in \mathcal{G}$ such that $d(x,p)=\bot =d(p,x)$. Let $\overline{d} \in \overline{\mathcal{H}}$ and $u, v \in X \vee_{p} X$.
	\newline
	
	If $u=v$, then
	\begin{eqnarray*}
		\overline{d}(u,v) &=&\bigwedge\ \{d_{dis}(\nabla _{p}(u),\nabla _{p}(u)),d(\pi
		_{1}S_{p}(u),\pi _{1}S_{p}(u)), \\
		& & \hspace{8mm} d(\pi _{2}S_{p}(u),\pi _{2}S_{p}(u))\} \\
		&=&\top
	\end{eqnarray*}

	If $u\neq v$ and $\nabla _{p}(u)\neq \nabla _{p}(v)$, then $d_{dis}(\nabla _{p}(u),\nabla _{p}(v))=\bot $ since $d_{dis}$ is discrete. By Lemma \ref{initialStr}
	\begin{eqnarray*}
		\overline{d}(u,v) &=&\bigwedge\ \{d_{dis}(\nabla _{p}(u),\nabla _{p}(v)),d(\pi
		_{1}S_{p}(u),\pi _{1}S_{p}(v)), \\ 
		& & \hspace{8mm} d(\pi _{2}S_{p}(u),\pi _{2}S_{p}(v))\} \\
		&=&\bigwedge\ \{\bot ,d(\pi _{1}S_{p}(u),\pi _{1}S_{p}(v)),d(\pi
		_{2}S_{p}(u),\pi _{2}S_{p}(v))\} \\
		&=&\bot
	\end{eqnarray*}

	Suppose $u\neq v$ and $\nabla _{p}(u)=\nabla _{p}(v)$. If $\nabla _{p}(u)=x=\nabla _{p}(v)$ for some $x\in X$ with $x\neq p$, then $u=x_{1}$ and $v=x_{2}$ or $u=x_{2}$ and $v=x_{1}$ since $u\neq v$.

	If $u=x_{1}$ and $v=x_{2}$, then by Lemma \ref{initialStr}
	\begin{eqnarray*}
	\overline{d}(u,v) &=&\overline{d}(x_{1},x_{2}) \\
	&=&\bigwedge\ \{d_{dis}(\nabla _{p}(x_{1}),\nabla _{p}(x_{2})),d(\pi
	_{1}S_{p}(x_{1}),\pi _{1}S_{p}(x_{2})), \\
	& & \hspace{8mm} d(\pi _{2}S_{p}(x_{1}),\pi_{2}S_{p}(x_{2}))\} \\
	&=&\bigwedge\ \{\top,d(x,p)\} \\
	&=& d(x,p) \\
	&=& \bot
	\end{eqnarray*}
	since $x \not = p$ and $d(x,p)=\bot $.
	\newline

	Similarly, if $u=x_{2}$ and $v=x_{1}$, then
	\begin{eqnarray*}
	\overline{d}(u,v) &=&\overline{d}(x_{2},x_{1}) \\
	&=&\bigwedge\ \{d_{dis}(\nabla _{p}(x_{2}),\nabla _{p}(x_{1})),d(\pi
	_{1}S_{p}(x_{2}),\pi _{1}S_{p}(x_{1})), \\
	& & \hspace{8mm} d(\pi _{2}S_{p}(x_{2}),\pi_{2}S_{p}(x_{1}))\} \\
	&=&\bigwedge\ \{\top,d(p,x)\} \\
	&=& d(p,x) \\
	&=& \bot
	\end{eqnarray*}
	since $x \not = p$ and  $d(p,x)=\bot $.
	\newline
	
	Hence,  for all $ u,v\in X\vee _{p}X$, we get
	\begin{equation*}
	\begin{array}{r}
	\overline{d}(u,v)=
	\begin{cases}
	\top, & \text{$u=v$} \\
	\bot, & \text{$u\neq v$}%
	\end{cases}
	\end{array}
	\end{equation*}
	and it follows that $\overline{d}$ is the discrete L-metric on $X\vee _{p}X$, i.e., $\overline{\mathcal{H}}=\{\overline{d}\}$, which means $\mathcal{G}_{dis}=\textbf{L-MET(X)} $. By Definition \ref{T0veT1Def} (ii), $(X,\mathcal{G})$ is $T_{1}$ at $p$.
	
\end{proof}

\begin{theorem} \label{AllQuantaleT1App}
	Let $(X, \mathfrak{B})$ be a $L$-approach space and $p \in X$. Then, the followings are equivalent.
	\begin{enumerate}
		\item[(i)] $(X, \mathfrak{B})$ is $T_{1}$ at $p$.
		\item[(ii)] $\forall$ $x \in X$ with $x \not= p$, there exists $d\in \mathcal{G}$ such that $d(x,p)=\bot=d(p,x)$.
		\item[(iii)] $\forall$ $x \in X$ with $x \not= p$, $\delta(x, \{p\})= \bot=\delta(p, \{x\})$.
		\item[(iv)] $\forall$ $x \in X$ with $x \not= p$, there exists $\varphi \in \mathcal{A}(p)$ such that $\varphi (x)=\bot $ and there exists $\varphi \in \mathcal{A}(x)$ such that $\varphi (p)=\bot $.
	\end{enumerate}
\end{theorem}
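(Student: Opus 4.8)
The plan is to run the same four-way equivalence as in Theorem \ref{AllQuantaleT0App}, with every disjunction there replaced by a conjunction; this reflects that the $T_{1}$ condition demands $d(x,p)=\bot$ and $d(p,x)=\bot$ from the same $d$, rather than merely one of the two. I would prove the cycle $(i)\Leftrightarrow(ii)\Rightarrow(iii)\Rightarrow(iv)\Rightarrow(ii)$. The equivalence $(i)\Leftrightarrow(ii)$ is immediate: since an $L$-approach space carries the same data as an $L$-gauge space under the category equivalence, it is exactly the content of the $T_{1}$ characterization for $L$-gauge spaces proved immediately above.

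For $(ii)\Rightarrow(iii)$ I would invoke the transition formula Remark \ref{Transition}(i). With $A=\{p\}$ it reads $\delta(x,\{p\})=\bigwedge_{d'\in\mathcal{G}}d'(x,p)$, and similarly $\delta(p,\{x\})=\bigwedge_{d'\in\mathcal{G}}d'(p,x)$. Since (ii) supplies a single $d\in\mathcal{G}$ with $d(x,p)=\bot=d(p,x)$, both infima collapse to $\bot$, yielding $\delta(x,\{p\})=\bot=\delta(p,\{x\})$. For $(iii)\Rightarrow(iv)$ I would use the description of $\mathcal{A}(x)$ in Remark \ref{Transition}(ii): taking $A=\{p\}$, the defining constraint $\delta(x,\{p\})\leq\varphi(p)$ becomes vacuous once $\delta(x,\{p\})=\bot$, so one may select $\varphi\in\mathcal{A}(x)$ with $\varphi(p)=\bot$; symmetrically $\delta(p,\{x\})=\bot$ produces $\varphi\in\mathcal{A}(p)$ with $\varphi(x)=\bot$. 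Because both distances vanish, both conjuncts of (iv) hold at once.

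The step $(iv)\Rightarrow(ii)$ is where the $T_{1}$ argument genuinely departs from the $T_{0}$ one, and I expect it to be the only point requiring care. Running the $T_{0}$ extraction of Theorem \ref{AllQuantaleT0App} through Remark \ref{Transition}(iii) together with the base/support machinery of Definition \ref{L-App Base}, each conjunct of (iv) yields a metric in the gauge: from $\varphi\in\mathcal{A}(x)$ with $\varphi(p)=\bot$ one obtains $d_{1}\in\mathcal{G}$ with $d_{1}(x,p)=\bot$, and from $\varphi\in\mathcal{A}(p)$ with $\varphi(x)=\bot$ one obtains $d_{2}\in\mathcal{G}$ with $d_{2}(p,x)=\bot$. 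In the $T_{0}$ setting either metric alone suffices, but here I must realize both equalities by a single $d$. The remedy is the finite-meet stability of an $L$-gauge (axiom (iii) in the definition of an $L$-gauge): put $d=d_{1}\wedge d_{2}\in\mathcal{G}$, so that $d(x,p)\leq d_{1}(x,p)=\bot$ and $d(p,x)\leq d_{2}(p,x)=\bot$, giving $d(x,p)=\bot=d(p,x)$, which is precisely (ii). This passage from two metrics to their meet is the one substantive addition beyond the $T_{0}$ proof.
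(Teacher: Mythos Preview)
Your proposal is correct and follows essentially the same route as the paper, which simply states that the proof is analogous to that of Theorem \ref{AllQuantaleT0App}. In fact you supply more detail than the paper does: your observation that the implication $(iv)\Rightarrow(ii)$ requires combining the two separately obtained metrics via the finite-meet axiom of an $L$-gauge, $d=d_{1}\wedge d_{2}\in\mathcal{G}$, is exactly the one point where the $T_{1}$ argument is not a verbatim copy of the $T_{0}$ one, and the paper leaves this implicit.
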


\begin{proof}
It is analogous to the proof of Theorem \ref{AllQuantaleT0App}.
	\end{proof}

\begin{remark}
	If $L=[0, \infty]$ with order $\geq$ and operation as addition, local $\overline{T_{0}}$ $L$-approach spaces and local $T_{1}$ $L$-approach spaces reduce to classical local $\overline{T_{0}}$ and local $T_{1}$ approach spaces defined in \cite{Qasim, Qasim2}. 
\end{remark}

\begin{example} \label{T0 but not T1}
	Let $(L=[0, 1], \leq, *)$ be a triangular norm with binary operation $*$ defined as $\forall \alpha, \beta \in [0, 1], \alpha * \beta = \alpha \wedge \beta$ and named as minimum triangular norm. The triple $([0, 1], \leq, \wedge)$ is a quantale. Let $X=\{a, b,c\}$, $A \subset X$ and $\delta: X \times 2^{X} \rightarrow ([0, 1] ,\leq, \wedge)$ be a map defined by $\forall x \in X$, $\delta(x, \emptyset) = 0$, $\delta(x, A)=1$ if $x \in A$, $\delta(b, \{a\})=0=\delta(c, \{a\})$, $\delta(a,\{b\})=1/2=\delta(a, \{b,c\})$, $\delta(c, \{b\})=1/3=\delta(c, \{a,b\})$, $\delta(a, \{c\})=1/4$ and $\delta(b, \{c\})= 1/5 = \delta(b, \{a,c\})$. Clearly, $\delta(x, A)$ is a $L$-approach distance space. By Theorem \ref{AllQuantaleT0App}, $(X, \delta)$ is $\overline{T_{0}}$ at $p=a$ but neither $\overline{T_{0}}$ at $p=b$ nor $p=c$. Similarly, by Theorem \ref{AllQuantaleT1App}, $(X, \delta)$ is not $T_{1}$ at $p$, $\forall p \in X$.
\end{example}

\section{Closedness and D-connectedness}

Let $X$ be a set and $p$ be a point in $X$. The \textit{infinite wedge product} (``one point union of infinite sets") of $X$ at $p$, $\vee^{\infty}_{p} X$ is formed by taking countably disjoint families of $X$.

A point $x$ in $\vee^{\infty}_{p} X$ is denoted as $x_{i}$ if it lies in $i$th component.

Let $X^{\infty}= X \times X \times X \times...$	be the infinite cartesian product of $X$.

\begin{definition} (cf. \cite{BaranClosed})
	\begin{enumerate} 
		\item[(i)]  A map $A^{\infty}_{p}:\vee^{\infty}_{p} X \rightarrow X^{\infty} $ is called infinite principle axis map at $p$ if
		$A^{\infty}_{p}(x_{i})= (p,p,....,p,x,p,...)$.
		\item[(ii)] A map $\nabla^{\infty}_{p}:\vee^{\infty}_{p} X \rightarrow X^{\infty} $ is called infinite folding map $p$ if
		$\nabla^{\infty}_{p}(x_{i})= x$  for all $i \in I$.
	\end{enumerate}	
		
\end{definition}

\begin{definition} \label{closedVeconnect} (cf. \cite{BaranClosed})
	Let $U:\mathcal{E}\rightarrow \mathbf{Set}$ be
	topological, $X \in Ob(\mathcal{E})$ with $U(X)=Y$ and $p \in Y$.
	
	\begin{enumerate}
		\item [(i)] $\{p\}$ is closed iff the initial lift of the $U$%
		-source $\{A^{\infty}_{p}:\vee^{\infty}_{p} Y \rightarrow Y^{\infty}$ and $\nabla^{\infty}_{p}:\vee^{\infty}_{p} Y \rightarrow UD(Y^{\infty})= Y^{\infty}\}$ is discrete, where $D$ is the discrete functor.
		\item[(ii)] $X$ is $D$-connected iff any morphism from $X$ to any discrete object is constant.
	\end{enumerate}	
\end{definition}

\begin{theorem} \label{p is closed}
	\label{closed} Let $(X,\mathcal{G})$ be an L-gauge space and $ p\in X $.  $p$ is closed in $X$ if and only if for all $x\in X$ with $x\neq p$, there exists $ d\in \mathcal{G}$ such that $d(x,p)=\bot $ or $d(p,x)=\bot $.
\end{theorem}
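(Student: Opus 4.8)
The plan is to follow verbatim the strategy of the proof of Theorem \ref{local T0 L-App}, replacing the finite wedge $X \vee_{p} X$ by the infinite wedge $\vee^{\infty}_{p} X$, the principal axis map by $A^{\infty}_{p}$, and the folding map by $\nabla^{\infty}_{p}$; indeed the characterizing condition is the same, so the computations should collapse to the $\overline{T_{0}}$ ones. Concretely, I would first form the initial $L$-gauge $\overline{\mathcal{G}}$ on $\vee^{\infty}_{p} X$ induced by the $U$-source $\{A^{\infty}_{p}:\vee^{\infty}_{p} X \to (X^{\infty}, \mathcal{G}^{\infty}),\ \nabla^{\infty}_{p}:\vee^{\infty}_{p} X \to (X^{\infty}, \mathcal{G}_{dis})\}$ of Definition \ref{closedVeconnect}(i), where $\mathcal{G}^{\infty}$ is the product $L$-gauge on $X^{\infty}$ (induced by the projections $\pi_{j}:X^{\infty}\to X$) and, by Proposition \ref{gdis}, $\mathcal{G}_{dis}=\textbf{L-MET}(X^{\infty})$ is the discrete $L$-gauge with basis $\{d^{\infty}_{dis}\}$. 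By Lemma \ref{initialStr}(i), a typical basis element of $\overline{\mathcal{G}}$ has the form $\overline{d}(u,v)=d^{\infty}_{dis}(\nabla^{\infty}_{p}u,\nabla^{\infty}_{p}v)\wedge\bigwedge_{j\in K}d_{j}(\pi_{j}A^{\infty}_{p}u,\pi_{j}A^{\infty}_{p}v)$ with $K$ finite and $d_{j}\in\mathcal{G}$. The decisive observation, which makes the infinite setting collapse to the finite one, is that two wedge points sitting in distinct components $i\neq j$ differ under $A^{\infty}_{p}$ only in coordinates $i$ and $j$ (all other coordinates being $p$), so only two $L$-metric terms are nontrivial.

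For the forward direction ($p$ closed $\Rightarrow$ condition), I would fix $x\in X$ with $x\neq p$ and test discreteness of $\overline{\mathcal{G}}$ on the pair $u=x_{1}$, $v=x_{2}$. Since $\nabla^{\infty}_{p}(x_{1})=(x,x,\dots)=\nabla^{\infty}_{p}(x_{2})$ the folding term is $\top$; and since $A^{\infty}_{p}(x_{1})$ carries $x$ in coordinate $1$ and $p$ elsewhere while $A^{\infty}_{p}(x_{2})$ carries $x$ in coordinate $2$, the only nontrivial projection terms are $d(x,p)$ (coordinate $1$) and $d(p,x)$ (coordinate $2$). Discreteness forces $\overline{d}_{dis}(x_{1},x_{2})=\bot$, so by Lemma \ref{initialStr} the infimum $\top\wedge d(x,p)\wedge d(p,x)$ vanishes, which yields $d(x,p)=\bot$ or $d(p,x)=\bot$ for the corresponding $d\in\mathcal{G}$.

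For the converse, assume the stated condition and evaluate a basis element $\overline{d}$ on an arbitrary pair $u\neq v$ of $\vee^{\infty}_{p} X$. If $\nabla^{\infty}_{p}u\neq\nabla^{\infty}_{p}v$ the discrete folding term is already $\bot$, so $\overline{d}(u,v)=\bot$. The only remaining case is $\nabla^{\infty}_{p}u=\nabla^{\infty}_{p}v$ with $u\neq v$, which forces $u=x_{i}$, $v=x_{j}$ for a common $x\neq p$ and indices $i\neq j$; here the folding term is $\top$ and the projection terms in coordinates $i$ and $j$ are $d_{i}(x,p)$ and $d_{j}(p,x)$. Choosing the basis so that the relevant metrics are the one supplied by the hypothesis for this $x$ makes one of these $\bot$, hence $\overline{d}(u,v)=\bot$. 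Together with $\overline{d}(u,u)=\top$ this exhibits the discrete $L$-metric on $\vee^{\infty}_{p} X$ inside $\overline{\mathcal{G}}$, so the initial lift is discrete and, by Definition \ref{closedVeconnect}(i), $p$ is closed.

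The main obstacle is the bookkeeping in the converse: a single basis element involves only finitely many metrics $d_{j}$, while the hypothesis produces a possibly different separating metric for each $x\neq p$, so one cannot literally write one basis element equal to the discrete metric when $X$ is infinite. I would resolve this exactly as in Theorem \ref{local T0 L-App}: instead of demanding a single discrete basis element, I would show that the discrete $L$-metric $\overline{d}_{dis}$ is locally supported by $\overline{\mathcal{H}}$ and hence lies in the locally saturated gauge $\overline{\mathcal{G}}$ (Proposition \ref{locally directed L-gauge}). For a fixed first argument $u=x_{i}$ the only partner not already separated by the folding term is $x_{j}$, so a single basis element built from the metric attached to $x$ suffices to verify local support at $u$. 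The rest is the routine coordinatewise evaluation of $A^{\infty}_{p}$ and $\nabla^{\infty}_{p}$ sketched above.
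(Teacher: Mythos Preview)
Your plan matches the paper's proof almost line for line: both evaluate the initial basis on pairs $x_i,x_j$ in the infinite wedge, reduce to $\bigwedge\{\top,d(x,p),d(p,x)\}$, and finish by the same three-case analysis on $u,v$. You actually go further than the paper by flagging the bookkeeping issue (different $x$'s may require different $d$'s, while a basis element is a finite meet) and by proposing to resolve it through local support of $\overline{d}_{dis}$ rather than by exhibiting a single discrete basis element; the paper simply writes ``let $\overline{d}\in\overline{\mathcal{H}}$'' and then silently uses the hypothesis metric as if it were the one defining $\overline{d}$.

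There is, however, one point where your local-support sketch---and equally the paper's argument---is incomplete. To verify local support of $\overline{d}_{dis}$ at a fixed $u=x_i$ you need a \emph{single} basis element $e\in\overline{\mathcal{H}}$ handling all partners $v=x_j$ ($j\neq i$) simultaneously. Projection $\pi_i$ applied to $A_p^\infty$ yields $d(x,p)$ uniformly in $j$, so if the hypothesis supplies $d(x,p)=\bot$ you are done with $e=d_{dis}\circ(\nabla_p^\infty)^2\wedge d\circ(\pi_iA_p^\infty)^2$. But if the hypothesis supplies only $d(p,x)=\bot$, the term $d(p,x)$ arises from projection $\pi_j$, and $j$ ranges over infinitely many indices while $e$ is a finite meet; no finite choice of projections then forces $e(x_i,x_j)=\bot$ for every $j$. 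This asymmetry between the two disjuncts is exactly where the infinite wedge differs from the finite wedge of Theorem~\ref{local T0 L-App}, and your sentence ``a single basis element built from the metric attached to $x$ suffices'' does not address it. Since the paper's converse has the same lacuna, your proposal is at the paper's level of rigor; but the step deserves either a further argument or an acknowledgement that only the disjunct $d(x,p)=\bot$ is being used at this point.
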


\begin{proof}
	Let $(X,\mathcal{G})$ be an L-gauge space, $ p\in X $ and $p$ is closed in $X$. Let $\overline{\mathcal{G}}$ be initial L-gauge on $\vee_{p}^{\infty}X$ induced by $A_{p}^{\infty}:\vee_{p}^{\infty}X \rightarrow U(X^{\infty},\mathcal{G}^{*})=X^{\infty}$ and $\nabla _{p}^{\infty}: \vee_{p}^{\infty}X \rightarrow U(X,\mathcal{G}_{dis})=X$ where $\mathcal{G}_{dis}$ is discrete structure on $X$ and $\mathcal{G}^{*}$ be the product structure on $X^{\infty}$ induced by $\pi _{i}: X^{\infty}\rightarrow X$ $(i\in I)$ projection maps. Suppose that $\mathcal{H}_{dis}=\{d_{dis}\}$ is a basis for discrete L-gauge where $d_{dis}$ is the discrete L-metric on $X$. Let $\mathcal{H}$ be L-gauge basis of $\mathcal{G}$ and $d\in \mathcal{H}$, and $\overline{\mathcal{H}}=\{\overline{d}_{dis}\}$ be initial L-gauge basis of $\overline{\mathcal{G}}$ where $\overline{d}_{dis}$ is the discrete L-metric on $\vee_{p}^{\infty}X$. 
	
	We will show that for all $x\in X$ with $x\neq p$, there exists $ d\in \mathcal{G}$ such that $d(x,p)=\bot $ or $d(p,x)=\bot $. Suppose that $d(x,p)>\bot $ and $d(p,x)>\bot $ for all $d\in \mathcal{G}$ and $x\in X$ with $x\neq p$. For $i,j,k\in I$ with $i\neq j$ and $i\neq k\neq j$, note that 
	\begin{equation*}
	d_{dis}(\nabla _{p}^{\infty}(x_{i}),\nabla _{p}^{\infty}(x_{j}))=d_{dis}(x,x)=\top
	\end{equation*}
	\begin{equation*}
	d(\pi _{i}A_{p}^{\infty}(x_{i}),\pi _{i}A_{p}^{\infty}(x_{j}))=d(x,p)
	\end{equation*}
	\begin{equation*}
	d(\pi _{j}A_{p}^{\infty}(x_{i}),\pi _{j}A_{p}^{\infty}(x_{j}))=d(p,x)
	\end{equation*}
	\begin{equation*}
	d(\pi _{k}A_{p}^{\infty}(x_{i}),\pi _{k}A_{p}^{\infty}(x_{j}))=d(p,p)=\top
	\end{equation*}
	Since $x_{i}\neq x_{j}$ $(i\neq j)$ and $p$ is closed in $X$, by Lemma \ref{initialStr}
	\begin{eqnarray*}
		\overline{d}_{dis}(x_{i},x_{j}) &=& \bigwedge\ \{ d_{dis}(\nabla _{p}^{\infty}(x_{i}),\nabla _{p}^{\infty}(x_{j})), d(\pi _{i}A_{p}^{\infty}(x_{i}),\pi _{i}A_{p}^{\infty}(x_{j})), \\
		& & \hspace{8mm} d(\pi _{j}A_{p}^{\infty}(x_{i}),\pi _{j}A_{p}^{\infty}(x_{j})), d(\pi _{k}A_{p}^{\infty}(x_{i}),\pi _{k}A_{p}^{\infty}(x_{j}))  \} \\
		&=& \bigwedge\ \{\top,d(x,p),d(p,x)\} \\
		&>& \bot
	\end{eqnarray*}
	A contradiction to the fact that $\overline{d}_{dis}$ is the discrete L-metric on $\vee_{p}^{\infty}X$. Hence $d(x,p)=\bot $ or $d(p,x)=\bot $.
	\newline
	
	Conversely, let $\overline{\mathcal{H}}$ be initial L-gauge basis on $\vee_{p}^{\infty}X$ induced by $A_{p}^{\infty}:\vee_{p}^{\infty}X \rightarrow U(X^{\infty},\mathcal{G}^{*})=X^{\infty}$ and $\nabla _{p}^{\infty}: \vee_{p}^{\infty}X \rightarrow U(X,\mathcal{G}_{dis})=X$ where, by Proposition \ref{gdis}, $\mathcal{G}_{dis}=\textbf{L-MET(X)} $ discrete L-gauge on $X$ and $\mathcal{G}^{*}$ be the product structure on $X^{\infty}$ induced by $\pi _{i}: X^{\infty}\rightarrow X$ $(i\in I)$ projection maps.
	
	Suppose for all $x\in X$ with $x\neq p$, there exists $ d\in \mathcal{G}$ such that $d(x,p)=\bot $ or $d(p,x)=\bot $. Let $\overline{d} \in \overline{\mathcal{H}}$ and $u, v \in \vee_{p}^{\infty}X$.
	\newline

	If $u=v$, then for $i\in I$,
		\begin{eqnarray*}
		\overline{d}(u,v) &=& \bigwedge\ \{ d_{dis}(\nabla _{p}^{\infty}(u),\nabla _{p}^{\infty}(u)), d(\pi _{i}A_{p}^{\infty}(u),\pi _{i}A_{p}^{\infty}(u)) \} \\
		&=& \top
		\end{eqnarray*}

	If $u\neq v$ and $\nabla_{p}^{\infty}(u)\neq \nabla_{p}^{\infty}(v)$, then $d_{dis}(\nabla _{p}^{\infty}(u),\nabla _{p}^{\infty}(v))=\bot $ since $d_{dis}$ is discrete. By Lemma \ref{initialStr} for $i\in I$,
		\begin{eqnarray*}
		\overline{d}(u,v) &=& \bigwedge\ \{ d_{dis}(\nabla _{p}^{\infty}(u),\nabla _{p}^{\infty}(v)), d(\pi _{i}A_{p}^{\infty}(u),\pi _{i}A_{p}^{\infty}(v)) \} \\
		&=& \bigwedge\ \{ \bot, d(\pi _{i}A_{p}^{\infty}(u),\pi _{i}A_{p}^{\infty}(v)) \} \\	
		&=&\bot
		\end{eqnarray*}

	Suppose $u\neq v$ and $\nabla_{p}^{\infty}(u)=\nabla_{p}^{\infty}(v)$. If $\nabla_{p}^{\infty}(u)=x=\nabla _{p}^{\infty}(v)$ for some $x\in X$ with $x\neq p$, then $u=x_{i}$ and $v=x_{j}$ for $i,j\in I$ with $i\neq j$ since $u\neq v$. Let $u=x_{i}$, $v=x_{j}$ and $i,j,k\in I$ with $i\neq j$ and $i\neq k\neq j$.
		
		\begin{eqnarray*}
		d_{dis}(\nabla_{p}^{\infty}(u),\nabla_{p}^{\infty}(v)) &=& d_{dis}(\nabla_{p}^{\infty}(x_{i}),\nabla_{p}^{\infty}(x_{j})) \\
	 	&=&d_{dis}(x,x) \\
	 	&=&\top
		\end{eqnarray*}
		\begin{eqnarray*}
			d(\pi _{i}A_{p}^{\infty}(u),\pi _{i}A_{p}^{\infty}(v)) &=& d(\pi_{i}A_{p}^{\infty}(x_{i}),\pi_{i}A_{p}^{\infty}(x_{j})) \\
			&=&d(x,p)
		\end{eqnarray*}
		\begin{eqnarray*}
			d(\pi _{j}A_{p}^{\infty}(u),\pi _{j}A_{p}^{\infty}(v)) &=& d(\pi_{j}A_{p}^{\infty}(x_{i}),\pi_{j}A_{p}^{\infty}(x_{j})) \\
			&=&d(p,x)
		\end{eqnarray*}
	
		and
		\begin{eqnarray*}
			d(\pi _{k}A_{p}^{\infty}(u),\pi _{k}A_{p}^{\infty}(v)) &=& d(\pi_{k}A_{p}^{\infty}(x_{i}),\pi_{k}A_{p}^{\infty}(x_{j})) \\
			&=&d(p,p) \\
			&=&\top
		\end{eqnarray*}
	
	It follows that
		\begin{eqnarray*}
			\overline{d}(u,v) &=& \bigwedge\ \{ d_{dis}(\nabla _{p}^{\infty}(x_{i}),\nabla _{p}^{\infty}(x_{j})), d(\pi _{i}A_{p}^{\infty}(x_{i}),\pi _{i}A_{p}^{\infty}(x_{j})), \\
			& & \hspace{8mm} d(\pi _{j}A_{p}^{\infty}(x_{i}),\pi _{j}A_{p}^{\infty}(x_{j})), d(\pi _{k}A_{p}^{\infty}(x_{i}),\pi _{k}A_{p}^{\infty}(x_{j}))  \} \\
			&=& \bigwedge\ \{\top,d(x,p),d(p,x)\}
		\end{eqnarray*}
	By the assumption $d(x,p)$ $=\bot $ or $d(p,x)=\bot $, we get $\overline{d}(u,v)=\bot $.
	\newline
		
	Therefore, $\forall\ u,v\in \vee_{p}^{\infty}X$, we have
	\begin{equation*}
	\begin{array}{r}
	\overline{d}(u,v)=
	\begin{cases}
	\top , & \text{$u=v$} \\
	\bot , & \text{$u\neq v$}
	\end{cases}
	\end{array}
	\end{equation*}
	and by the assumption $\overline{d}$ is the discrete L-metric on $\vee_{p}^{\infty}X$, i.e., $\overline{\mathcal{H}}=\{\overline{d}\}$,
	which means $\mathcal{G}_{dis}=\textbf{L-MET(X)} $. By Definition \ref{closedVeconnect} (i), $p$ is closed in $X$.
\end{proof}

\begin{theorem} \label{T0 and closed are equal}
	Let $(X, \mathfrak{B})$ be a $L$-approach space and $p \in X$. Then, the followings are equivalent.
	\begin{enumerate}
		\item[(i)] $(X, \mathfrak{B})$ is $\overline{T_{0}}$ at $p$.
		\item[(ii)] $\{p\}$ is closed in $(X, \mathfrak{B})$.
	\end{enumerate}	
\end{theorem}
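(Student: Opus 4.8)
The plan is to exploit the fact that both properties in question have already been translated, in the two immediately preceding theorems, into \emph{exactly the same} elementary condition on the underlying $L$-gauge. Specifically, Theorem~\ref{AllQuantaleT0App} (equivalence of parts (i) and (ii)) tells us that $(X,\mathfrak{B})$ is $\overline{T_{0}}$ at $p$ if and only if for every $x\in X$ with $x\neq p$ there exists $d\in\mathcal{G}$ with $d(x,p)=\bot$ or $d(p,x)=\bot$. On the other hand, Theorem~\ref{p is closed} establishes that $\{p\}$ is closed in $(X,\mathfrak{B})$ if and only if for every $x\in X$ with $x\neq p$ there exists $d\in\mathcal{G}$ with $d(x,p)=\bot$ or $d(p,x)=\bot$. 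Since these are verbatim the same gauge-theoretic condition, the two properties are equivalent.

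Concretely, I would structure the argument as a two-line chain of biconditionals. First I would invoke Theorem~\ref{AllQuantaleT0App} to replace (i) by the gauge condition $(\ast)$: ``$\forall x\neq p,\ \exists d\in\mathcal{G}$ such that $d(x,p)=\bot$ or $d(p,x)=\bot$.'' Then I would invoke Theorem~\ref{p is closed} to recognize that $(\ast)$ is precisely the characterization of (ii). Composing the two equivalences gives $(\mathrm{i})\Leftrightarrow(\ast)\Leftrightarrow(\mathrm{ii})$, completing the proof. No direct manipulation of the wedge products $X\vee_{p}X$ or $\vee_{p}^{\infty}X$, nor of the initial lifts, is needed here, because all of that work was already carried out in establishing the two prior theorems.

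There is essentially no technical obstacle in this final step: the entire content has been front-loaded into Theorems~\ref{AllQuantaleT0App} and~\ref{p is closed}, and the present statement is a purely formal consequence of their sharing a common intermediate characterization. The only point worth a moment's care is to confirm that the gauge condition appearing in Theorem~\ref{p is closed} (derived from the \emph{infinite} wedge $\vee_{p}^{\infty}X$ and infinite axis map $A_{p}^{\infty}$) is literally identical to the one appearing in Theorem~\ref{local T0 L-App} / Theorem~\ref{AllQuantaleT0App} (derived from the \emph{finite} wedge $X\vee_{p}X$ and axis map $A_{p}$); inspecting both statements confirms that they coincide word for word, so the chain of equivalences is valid and the proof reduces to citing the two results.
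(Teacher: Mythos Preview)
Your proposal is correct and matches the paper's own proof, which simply reads ``It follows from Theorems~\ref{local T0 L-App} and~\ref{p is closed}.'' The only cosmetic difference is that you cite Theorem~\ref{AllQuantaleT0App} (whose (i)$\Leftrightarrow$(ii) is precisely Theorem~\ref{local T0 L-App}) rather than Theorem~\ref{local T0 L-App} directly, so the approaches are essentially identical.
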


\begin{proof}
	It follows from Theorems \ref{local T0 L-App} and \ref{p is closed}.
\end{proof}

\begin{theorem}
	\label{dconnect} An L-gauge space $(X,\mathcal{G})$ is D-connected if and only if for each distinct points $x$ and $y$ in $X$, there exists $ d\in \mathcal{G}$ such that $d(x,y)=\top=d(y,x)$.
\end{theorem}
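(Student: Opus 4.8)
The plan is to argue directly from Definition \ref{closedVeconnect}(ii) rather than through a wedge construction, since $D$-connectedness is phrased in terms of morphisms into discrete objects. First I would unwind what such a morphism is. By Proposition \ref{gdis} a discrete object is a set $Y$ equipped with $\mathcal{G}_{dis}=\textbf{L-MET(Y)}$, generated by the discrete $L$-metric $d_{dis}^{Y}$. Since $d_{dis}^{Y}$ is the pointwise-smallest $L$-metric on $Y$ and an $L$-gauge is upward closed, a map $f:(X,\mathcal{G})\to(Y,\mathcal{G}_{dis})$ is an $L$-gauge morphism if and only if the single pulled-back metric $d_{f}:=d_{dis}^{Y}\circ(f\times f)$ lies in $\mathcal{G}$, where $d_{f}(x,x')=\top$ when $f(x)=f(x')$ and $d_{f}(x,x')=\bot$ otherwise. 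One checks immediately that $d_{f}$ is an $L$-metric (its triangle inequality uses only $\bot*\alpha=\bot$). With this reduction, $f$ is constant exactly when $d_{f}$ is the top $L$-metric, so $(X,\mathcal{G})$ is $D$-connected iff the only $L$-metric of this ``partition'' shape belonging to $\mathcal{G}$ is the top one.

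Next I would introduce the relation $x\equiv y$ meaning that there exists $d\in\mathcal{G}$ with $d(x,y)=\top=d(y,x)$ (together with $x\equiv x$); the stated gauge condition is precisely the assertion that $\equiv$ is the all-relation on $X$. The point is that $\equiv$ is an equivalence relation: reflexivity and symmetry are clear, and transitivity follows by taking the meet $d_{1}\wedge d_{2}\in\mathcal{G}$ of two witnesses (axiom (iii) of an $L$-gauge) and applying the triangle inequality $(d_{1}\wedge d_{2})(x,y)*(d_{1}\wedge d_{2})(y,z)\leq(d_{1}\wedge d_{2})(x,z)$ with $\top*\top=\top$. I would then prove the equivalence in contrapositive form: $(X,\mathcal{G})$ fails to be $D$-connected iff $\equiv$ has at least two classes. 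For the easy implication, if $\equiv$ is the all-relation I would show every morphism $f$ into a discrete object is constant, comparing a witness $d\in\mathcal{G}$ for a pair $x,y$ against $d_{f}\in\mathcal{G}$ and using meet-closedness together with upward closure to rule out $f(x)\neq f(y)$. For the converse I would use the quotient $q:X\to X/\!\equiv$, equip $X/\!\equiv$ with its discrete structure, and exhibit $q$ as a nonconstant morphism once there are two or more classes.

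The main obstacle is exactly this last construction: verifying that the quotient map $q$ (equivalently, the partition $L$-metric $d_{q}$ recording $\equiv$) is a genuine $L$-gauge morphism, i.e.\ that $d_{q}\in\mathcal{G}$. This is the delicate direction, because membership in $\mathcal{G}$ requires producing a gauge element that actually vanishes across distinct $\equiv$-classes, which is information about $\bot$-values rather than the $\top$-values packaged into the definition of $\equiv$; bridging the two is where I expect the real work to lie, and I would attack it through the local-support and saturation axioms (via Proposition \ref{locally directed L-gauge}) rather than by manipulating individual metrics pointwise. Notably, the two routine cases $u=v$ and the $\nabla$-coincidences that dominate the earlier proofs of Theorems \ref{local T0 L-App} and \ref{p is closed} do not reappear here, since no wedge or initial lift is involved; the entire content is the morphism-to-discrete reduction above together with the equivalence-relation bookkeeping.
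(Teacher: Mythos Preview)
Your reduction of a morphism into a discrete object to the single requirement $d_f:=d_{dis}^{Y}\circ(f\times f)\in\mathcal{G}$ is correct and matches what the paper uses. The gap is in the direction you label ``easy.'' From the stated condition and a nonconstant $f$ you plan to ``compare a witness $d\in\mathcal{G}$ against $d_f\in\mathcal{G}$ using meet-closedness together with upward closure to rule out $f(x)\neq f(y)$.'' No contradiction is available: it is perfectly consistent for $\mathcal{G}$ to contain both a $d$ with $d(x,y)=\top$ and the partition metric $d_f$ with $d_f(x,y)=\bot$; taking their meet or passing upward only produces further elements of $\mathcal{G}$, not an absurdity. In fact the hypothesis is vacuous. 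The constant-$\top$ map $d_\top$ is an $L$-metric, and since every $L$-gauge is nonempty and upward closed (Definition~2.3(i),(ii)) one has $d_\top\in\mathcal{G}$ for \emph{every} $L$-gauge space; hence the existential condition is always met, while a two-point discrete $L$-gauge space is certainly not $D$-connected. Correspondingly your relation $\equiv$ is always the all-relation (witnessed by $d_\top$), so $X/\!\equiv$ is a single point and the quotient construction you flag as the ``main obstacle'' can never produce a nonconstant morphism either.

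For what it is worth, the paper's proof commits the identical slip: in the converse it sets $d=d_{dis}\circ(f\times f)$, computes $d(x,y)=\bot$, and then declares ``a contradiction since $d(x,y)=\top=d(y,x)$,'' silently identifying the pulled-back metric with the existentially quantified witness from the hypothesis. So you are not missing a trick that the paper found; the statement itself is defective. Your own reduction already gives the honest characterization---$(X,\mathcal{G})$ is $D$-connected iff the only partition-type metric lying in $\mathcal{G}$ is $d_\top$---and any reformulation in terms of individual values $d(x,y)$ must carry a universal, not an existential, quantifier over $d\in\mathcal{G}$.
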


\begin{proof}
	Let $(X,\mathcal{G})$ be an L-gauge space, $(Y,\mathcal{G}_{dis})$ be a discrete L-gauge space with $CardY > 1$ and $f \colon (X,\mathcal{G}) \rightarrow (Y,\mathcal{G}_{dis})$ be a contraction. Suppose $(X,\mathcal{G})$ is D-connected, i.e., every contraction $f$ from $(X,\mathcal{G})$ to any discrete L-gauge space is constant. Since $f$ is a contraction map and therefore, by definition, $\forall\ d_{dis} \in \mathcal{G}_{dis}$, $d_{dis} \circ (f \times f) \in \mathcal{G} $, i.e., there exists $ d\in \mathcal{G}$ such that $d=d_{dis}(f,f)$. It follows that for each distinct points $x$ and $y$ in $X$, $d(x,y)=d_{dis}(f(x),f(y))$. $f$ is constant since $(X,\mathcal{G})$ is D-connected. Then, for $x, y \in X$, $f(x)=f(y)=\alpha \in Y$. So 
	\begin{eqnarray*}
		d(x,y)=d_{dis}(f(x),f(y))=d_{dis}(\alpha,\alpha)=\top
	\end{eqnarray*}
		
	and
	\begin{eqnarray*}
		d(y,x)=d_{dis}(f(y),f(x))=d_{dis}(\alpha,\alpha)=\top
	\end{eqnarray*}
	Hence if $(X,\mathcal{G})$ is D-connected, then there exists $ d\in \mathcal{G}$ such that $d(x,y)=\top=d(y,x)$.
	\newline
	
	Conversely, suppose that the condition holds, i.e., for each distinct points $x$ and $y$ in $X$, there exists $ d\in \mathcal{G}$ such that $d(x,y)=\top=d(y,x)$. We show that $(X,\mathcal{G})$ is D-connected. Let $f \colon (X,\mathcal{G}) \rightarrow (Y,\mathcal{G}_{dis})$ be a contraction and $(Y,\mathcal{G}_{dis})$ be a discrete L-gauge space. If $CardY = 1$, then $(X,\mathcal{G})$ is D-connected since $f$ is constant. Suppose that $CardY > 1$ and $f$ is not constant. Then, there exist distinct points $x$ and $y$ in $X$ such that $f(x) \neq f(y)$. So
	\begin{eqnarray*}
		d(x,y)=d_{dis}(f(x),f(y))=\bot
	\end{eqnarray*}
	
	and
	\begin{eqnarray*}
		d(y,x)=d_{dis}(f(y),f(x))=\bot
	\end{eqnarray*}
	 a contradiction since $d(x,y)=\top=d(y,x)$ for $x, y \in X$ with $x \neq y$. Hence, $f$ is constant, i.e., $(X,\mathcal{G})$ is D-connected.
	
\end{proof}

\begin{theorem} \label{App D-connected spaces}
	Let $(X, \mathfrak{B})$ be a $L$-approach space. Then, the followings are equivalent.
	\begin{enumerate}
		\item[(i)] $(X, \mathfrak{B})$ is $D$-connected
		\item[(ii)] $\forall$ $x,y \in X$ with $x \not= y$, there exists $d\in \mathcal{G}$ such that $d(x,y)=\top=d(y,x)$.
		\item[(iii)] $\forall$ $x,y \in X$ with $x \not= y$, $\delta(x, \{y\})= \top=\delta(y, \{x\})$.
		\item[(iv)] $\forall$ $x,y \in X$ with $x \not= y$, there exists $\varphi \in \mathcal{A}(x)$ such that $\varphi (y)=\top $  and there exists $\varphi \in \mathcal{A}(x)$ such that $\varphi (y)=\top $.
	\end{enumerate}	
\end{theorem}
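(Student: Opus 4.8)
The plan is to prove the chain of implications $(i)\Leftrightarrow(ii)\Rightarrow(iii)\Rightarrow(iv)\Rightarrow(ii)$, following the same architecture as the proof of Theorem \ref{AllQuantaleT0App} but with the bottom element $\bot$ replaced by the top element $\top$ throughout and with the separating role of a single point taken over by the requirement on arbitrary distinct pairs $x,y$. The equivalence $(i)\Leftrightarrow(ii)$ needs no new argument: it is exactly the gauge-level statement already established in Theorem \ref{dconnect}, so I would simply cite it. Everything else is a matter of transporting the condition across the three equivalent presentations of an $L$-approach space by means of the transition formulas in Remark \ref{Transition}.

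For $(ii)\Rightarrow(iii)$ I would use the gauge-to-distance transition of Remark \ref{Transition}(i), $\delta(x,\{y\})=\bigwedge_{d\in\mathcal{G}}d(x,y)$ together with its symmetric counterpart $\delta(y,\{x\})=\bigwedge_{d\in\mathcal{G}}d(y,x)$, to convert the metric condition $d(x,y)=\top=d(y,x)$ into $\delta(x,\{y\})=\top=\delta(y,\{x\})$. For $(iii)\Rightarrow(iv)$ I would apply the distance-to-system transition of Remark \ref{Transition}(ii): with $A=\{y\}$ the condition $\delta(x,\{y\})=\top$ yields a $\varphi\in\mathcal{A}(x)$ with $\varphi(y)=\top$, and symmetrically a $\varphi\in\mathcal{A}(y)$ with $\varphi(x)=\top$, the two sub-clauses of $(iv)$ being understood as referring to $\mathcal{A}(x)$ and $\mathcal{A}(y)$ respectively. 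Finally, for $(iv)\Rightarrow(ii)$ I would run through the system-to-gauge transition of Remark \ref{Transition}(iii) and the base description of Definition \ref{L-App Base}, exactly as in the closing paragraph of the proof of Theorem \ref{AllQuantaleT0App}: from $d'(x,\cdot)\in\mathcal{A}(x)$ and the support inequality $\varphi(y)*\alpha\leq d'(x,y)\vee\omega$ for all $\alpha\lhd\top$ and $\bot\prec\omega$, together with $\varphi(y)=\top$, I would recover a gauge metric $d$ attaining $d(x,y)=\top$ (and symmetrically $d(y,x)=\top$).

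The step I expect to be the real obstacle is $(ii)\Rightarrow(iii)$. In the $\overline{T_{0}}$ setting the corresponding passage was effortless precisely because $\bot$ is \emph{absorbing} for the infimum $\bigwedge_{d\in\mathcal{G}}$, so a single metric with $d(x,p)=\bot$ already forced $\delta(x,\{p\})=\bot$. Here the target value is $\top$, which is the \emph{identity}, not an absorber, for $\bigwedge$, so the bare existence of one metric with $d(x,y)=\top$ does not by itself pin the infimum at $\top$; the condition has to be controlled across the whole gauge. I would therefore spend the main effort making the quantification precise and reconciling the existential phrasing inherited from Theorem \ref{dconnect} with the infimum in Remark \ref{Transition}(i); in particular, since the constant metric $d\equiv\top$ lies in every nonempty gauge by upward-closure, I would verify that the reading of $(ii)$ being used is the one genuinely equivalent to $(iii)$ and not a vacuous one. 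Once this point is settled, the remaining implications are routine substitutions into the transition formulas.
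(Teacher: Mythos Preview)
Your plan coincides with the paper's own proof, which in its entirety reads: ``$(i)\Leftrightarrow(ii)$ follows from Theorem \ref{dconnect}. By using Remark \ref{Transition} (i) to (v), proof is straightforward.'' So citing Theorem \ref{dconnect} and then shuttling among $(ii)$, $(iii)$, $(iv)$ via the transition formulas is exactly what the paper does; you have simply unpacked it.

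The obstacle you anticipate in $(ii)\Rightarrow(iii)$ is real, and it is not a difficulty you can argue around---it exposes a defect in the \emph{statement}. Every nonempty $L$-gauge is upward closed and therefore contains the constant metric $d_\top\equiv\top$; hence condition $(ii)$ (``there exists $d\in\mathcal{G}$ with $d(x,y)=\top=d(y,x)$'') is satisfied by \emph{every} $L$-gauge space. The same holds for $(iv)$, since the constant function $\varphi_\top\equiv\top$ lies in every approach-system filter $\mathcal{A}(x)$. Conditions $(i)$ and $(iii)$, by contrast, are genuinely restrictive. The equivalence can only hold if the existential quantifier in $(ii)$ and $(iv)$ is read as universal, which is also what the formula $\delta(x,\{y\})=\bigwedge_{d\in\mathcal{G}}d(x,y)$ forces. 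The paper's brief proof does not confront this, and the converse direction of the proof of Theorem \ref{dconnect} already contains the same slip (it derives a contradiction from $d(x,y)=\bot$ for the \emph{particular} metric $d=d_{dis}\circ(f\times f)$, not for the $d$ whose existence is assumed). Your instinct to scrutinize the quantifier is correct; the fix is to replace ``there exists'' by ``for all'' in $(ii)$ and $(iv)$, after which your outlined transitions go through routinely.
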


\begin{proof}
	$(i) \Leftrightarrow (ii)$ follows from Theorem \ref{dconnect}. By using Remark \ref{Transition} (i) to (v), proof is straightforward. 
\end{proof}
\begin{example} \label{Example of D-connected}
	Let $L=[0,1]\cup \{\bot = -1, \top =3\}$ with order $(\leq)$ and $*= \wedge$ is a quantale. Suppose $X$ be a non-empty set and define $\delta : X \times 2^{X} \rightarrow (L, \leq, \wedge)$ as $\forall x\in X$, $A \subseteq X$,
		\begin{equation*}
	\begin{array}{r}
	\delta(x,A)=
	\begin{cases}
	3, & \text{$A \not = \emptyset$} \\
	-1, & \text{$A = \emptyset$}%
	\end{cases}
	\end{array}
	\end{equation*}. By Theorem \ref{App D-connected spaces}, $(X, \delta)$ is $D$-connected.
\end{example}
\begin{remark}
	\begin{enumerate}
		\item[(i)] In $\mathbf{Top}$ as well as in $\mathbf{CHY}$ (category of Cauchy spaces and Cauchy continuous maps) local $T_{1}$ and notion of closedness are equivalent \cite{BaranSepBalk, KulaCauchy}, and local $T_{1}$ implies local $\overline{T_{0}}$. However, in $\textbf{L-App}$, by Theorem \ref{T0 and closed are equal},  local $\overline{T_{0}}$ and notion of closedness are equivalent and by Example \ref{T0 but not T1}, local $T_{1}$ implies local $\overline{T_{0}}$ but converse is not true in general.
		\item[(ii)] In $\mathbf{CP}$ (category of pairs and pair preserving maps) and in $\mathbf{Prox}$ (category of proximity spaces and $p$-maps), local $\overline{T_{0}}$, local $T_{1}$ and notion of closedness are equivalent \cite{BaranClosed, KulaProx}.
		\item[(iii)] By Examples \ref{T0 but not T1} and \ref{Example of D-connected}, there is no relation between notion of closedness and D-connected objects, local $T_{1}$ and $D$-connected objects. More precisely, in $\textbf{L-App}$ as well as in $\mathbf{Prox}$ \cite{KulaProx}, we can say that 
		\begin{enumerate}
			\item[(a)] if $X$ is $D$-connected, then, it is not $\overline{T_{0}}$ (resp. $T_{1}$) at $p$, $\forall p \in X$.
			\item [(b)] if $X$ is $\overline{T_{0}}$ (resp. $T_{1}$) at $p$, then $X$ is not $D$-connected for a point. 
			\item[(c)] if $X$ is not $D$-connected then, for any $p \in X$, it may or may not be $\overline{T_{0}}$ (resp. $T_{1}$) at $p$.
		\end{enumerate}
		
	\end{enumerate}
\end{remark}

\end{document}